\newcommand{\dst}{\displaystyle}
\newtheorem{propo}[]{Proposition}
\newtheorem{thm}[]{Theorem}
\newtheorem{lem}[]{Lemma}
\newtheorem{cor}[]{Corollary}
\theoremstyle{remark}
\newtheorem{defi}[thm]{Definition}
\begin{document}

\title[A de Montessus-type theorem]
{Incomplete Pad\'e approximantion and convergence of row sequences of
Hermite-Pad\'e approximants}

\author[J. Cacoq]{J. Cacoq}
\address{Dpto. de Matem\'aticas\\
Escuela Polit\'ecnica Superior \\
Universidad Carlos III de Madrid \\
Universidad 30, 28911 Legan\'es, Spain} \email{jcacoq@math.uc3m.es}
\thanks{The work of  B. de la Calle
received support from MINCINN under grant
MTM2009-14668-C02-02 and from UPM through Research Group ``Constructive Approximation Theory and Applications". The work of  J. Cacoq and G. L\'opez was supported by MINCINN
under grant MTM2009-14668-C01-02}

\author[B. de la Calle]{\hspace*{0.1 cm} B. de la Calle
Ysern}\address{Dpto. de Matem\'atica Aplicada\\
E. T. S.  de Ingenieros Industriales\\
Universidad Polit\'ecnica de Madrid\\
Jos\'e G. Abascal 2, 28006 Madrid, Spain}
\email{bcalle@etsii.upm.es}

\author[G. L\'opez ]{G. L\'opez Lagomasino}
\address{Dpto. de Matem\'aticas\\
Escuela Polit\'ecnica Superior \\
Universidad Carlos III de Madrid \\
Universidad 30, 28911 Legan\'es, Spain} \email{lago@math.uc3m.es}

\keywords{Montessus de Ballore Theorem, simultaneous
approximation, Hermite-Pad\'e approximation}

\subjclass[2010]{Primary 41A21, 41A28; Secondary 41A25}

\begin{abstract} We give a Montessus de Ballore type theorem for row sequences of Hermite-Pad\'e approximations of vector valued analytic functions refining some  results in this direction due to P.R. Graves-Morris and E.B. Saff. We do this introducing the notion of incomplete Pad\'e approximation which contains, in particular, simultaneous Pad\'e approximation and may be applied in the study of other systems of approximants as well.
\end{abstract}

\maketitle

\section{Introduction}
Let
\begin{equation} \label{i1} f(z) = \sum_{n=0}^{\infty} \phi_n z^n,
\qquad \phi_n \in {\mathbb{C}},
\end{equation}
denote a formal or convergent Taylor expansion about the origin. By
$D_0(f)$ and $R_0(f)$ we denote the disk and radius of convergence,
respectively, of the series \eqref{i1}.  In \cite{Had}, Jacques
Hadamard introduced the notion of $m$th disk of meromorphy $D_m(f)$
of $f$. When $R_0(f) = 0$ this disk is defined to be the empty set.
If $R_0(f) > 0$ then $D_m(f)$ is the largest disk centered at the
origin to which the analytic element $(f,D_0(f))$ can be extended as
a meromorphic function having no more than $m$ poles. Let $R_m(f)$
denote the radius of $D_m(f)$. In the cited paper, Hadamard proves a
beautiful formula which gives the values of the numbers $R_m(f)$ for
all $m \in \mathbb{Z}_+$ using exclusively the data provided by the
Taylor coefficients $\phi_n$. For $m=0$, it reduces to  Cauchy's
formula for the radius of convergence of a Taylor series. Hadamard's
finding is intimately connected with the convergence theory of
Pad\'e approximations.

\begin{defi}\label{defpade}
Let $f$ be the formal expansion \eqref{i1}. Let $n,m \in
{\mathbb{Z}}_+, n \geq m,$ be given. Then, there exist polynomials
$Q, P,$ satisfying
\begin{itemize}
\item[a.1)] $\deg P \leq n-m,\quad \deg Q \leq m,\quad Q \not\equiv 0,$
\item[a.2)] $[Q f - P](z) = A z^{n+1} + \cdots.$
\end{itemize}
Any pair $(Q,P)$ which satisfies $a.1)-a.2)$ defines a unique
rational function $\pi_{n,m} = P /Q $ which is called the Pad\'e
approximation of type $(n,m)$ of $f$.
\end{defi}

We have slightly modified (in an equivalent form) the usual
definition of an $(n,m)$ Pad\'e approximation having in mind the
aims of this paper. Let $\pi_{n,m} = P_{n,m}/Q_{n,m}$ where
$Q_{n,m}$ and $P_{n,m}$ are polynomials obtained cancelling all
common factors and, unless otherwise stated, normalizing $Q_{n,m}$ so
that
\begin{equation} \label{normado} Q_{n,m}(z) = \prod_{|\zeta_{n,k}|
\leq 1} \left(z - \zeta_{n,k}\right) \prod_{|\zeta_{n,k}| > 1}
\left(1- \frac{z}{\zeta_{n,k}}\right).
\end{equation}

Robert de Montessus de Ballore, using Hadamard's work, proved the
following result (see \cite{Mon}). Let $\mathcal{Q}_m(f)$ stand for
the polynomial (properly normalized as in $(\ref{normado})$) whose
zeros are the poles of $f$ in $D_m(f)$ with multiplicity equal to
the order of the corresponding pole. By ${\mathcal{P}}_m(f)$ we
denote this set of poles. Given a compact set ${K} \subset
{\mathbb{C}}$, $\|\cdot\|_{K}$ denotes the sup norm on $K$.
\vspace{0,2cm}

\noindent {\bf Montessus de Ballore Theorem.} {\em Assume that
$R_0(f) > 0$ and that $f$ has exactly $m$ poles in $D_m(f)$
(counting multiplicities), then
\begin{equation} \label{eq:6} \limsup_{n \to \infty}
\|f - \pi_{n,m}\|_{K}^{1/n} = \frac{\|z\|_ {K}}{R_{m}(f)},
\end{equation}
where ${K}$ is any compact subset of $D_m(f) \setminus
{\mathcal{P}}_m(f)$. Additionally
\begin{equation} \label{i2}
\limsup_{n \to \infty} \|\mathcal{Q}_m(f) - Q_{n,m}\|^{1/n} =
\frac{\max \{|\zeta|: \zeta \in {\mathcal{P}}_m(f)\}}{R_m(f)},
\end{equation}
where $\|\cdot\|$ denotes the coefficient norm in the space of
polynomials. } \vspace{0,2cm}

From this result it follows that if $\zeta$ is a pole of $f$ in
$D_m(f)$ of order $\tau$, then for each $\varepsilon >0$, there
exists $n_0$ such that for $n \geq n_0$, $Q_{n,m}$ has exactly
$\tau$ zeros in $\{z: |z - \zeta| < \varepsilon\}$. We say that each
pole of $f$ in $D_m(f)$ attracts as many zeros of $Q_{n,m}$ as its
order when $n$ tends to infinity. In Montessus' paper the geometric
rate expressed in (\ref{eq:6}) and (\ref{i2}) was not
explicitly given.

The simultaneous Hermite-Pad\'e approximation of systems of
functions has been a subject of major interest in the recent past.
Though most results deal with what could be called diagonal
sequences of simultaneous approximants, there are some interesting
results due to P. R. Graves-Morris and E. B. Saff for row sequences
which extend the Montessus Theorem, see \cite{GS1}-\cite{GS3}.

\begin{defi}\label{defsimultaneos}
Let ${\bf f} = (f_1,\ldots,f_d)$ be a system of $d$ formal Taylor
expansions as in $(\ref{i1})$. Fix a multi-index ${\bf m} =
({m_1},\ldots,m_d) \in {\mathbb{Z}}_+^d \setminus \{\bf 0\}$ where
${\bf 0}$ denotes the zero vector in ${\mathbb{Z}}_+^d$. Set $|{\bf
m}| = {m_1} +\cdots + m_d$. Then, for each $n \geq \max
\{{m_1},\ldots,m_d\}$, there exist polynomials $Q, P_j,
j=1,\ldots,d,$ such that
\begin{itemize}
\item[b.1)] $\deg P_j \leq n - m_j, j=1,\ldots,d,\quad \deg Q
\leq |\mathbf{m}|,\quad Q \not\equiv 0,$
\item[b.2)] $[Q f_j - P_j ](z) = A_j z^{n+1} + \cdots .$
\end{itemize}
The vector rational function ${\bf R}_{n,{\bf m}} = (P_1 /Q
,\ldots,P_d /Q)$ is called an $(n,{\bf m})$ Hermite-Pad\'e
approximation of ${\bf f}$.
\end{defi}

 Unlike the scalar case, in general, this
vector rational approximation is not uniquely determined and in the
sequel we assume that given $(n,{\bf m})$ one particular solution is
taken. For that solution we write
\begin{equation} \label{incomplete}
{\bf R}_{n,{\bf m}} =
(R_{n,{\bf m},1},\ldots,R_{n,{\bf m},d})= (P_{n,{\bf
m},1},\ldots,P_{n,{\bf m},d})/Q_{n,{\bf m}},
\end{equation}
where $Q_{n,{\bf m}}$ has no common factor simultaneously with all the
$P_{n,{\bf m},j}$ and is normalized the same way as $Q_{n,m}$ above.

\begin{defi}\label{defindependenciapolar1}
A vector ${\bf f} = (f_1,\ldots,f_d)$ of functions meromorphic in
some domain $D$ is said to be polewise independent with respect to
the multi-index ${\bf m} = ({m_1},\ldots,m_d) \in {\mathbb{Z}}_+^d
\setminus \{\bf 0\}$ in $D$ if there do not exist polynomials
$p_1,\ldots,p_d$, at least one of which is non-null, satisfying
\begin{itemize}
\item[c.1)] $\deg p_j \leq m_j -1, j=1,\ldots,d,$ if $m_j \geq 1,$
\item[c.2)] $p_j \equiv 0$ if $m_j =0,$
\item[c.3)] $\sum_{j=0}^d p_j f_j \in {\mathcal{H}}(D)$,
\end{itemize}
where ${\mathcal{H}}(D)$ denotes the space of analytic functions in
$D$.
\end{defi}
This notion was introduced in \cite{GS1}. When $d=1$ polewise
independence merely expresses  that the function has at least $m$ poles in
$D$.

\begin{defi}\label{defpolo}
Let ${\bf f} = (f_1,\ldots,f_d)$ be a system of formal Taylor
expansions about the origin and
$\mathbf{D}=\left(D_1,\dots,D_d\right)$ a system of domains such
that, for each $k=1,\dots,d,$ $f_k$ is meromorphic in $D_k$. We say
that a point a  is a pole of ${\bf f}$ in $\mathbf{D}$ of order
$\tau$ if there exists an index $k\in\{1,\dots,d\}$  such that $a\in
D_k$ and it is a pole of $f_k$ of order $\tau$, and for the rest of the
indices $j\not = k$ either a is a pole of $f_j$ of order less than
or equal to $\tau$ or $a\not\in D_j$.
\end{defi}


Polewise independence of $\mathbf{f}$ with respect to $\mathbf{m}$
in $D$  implies that $\mathbf{f}$ has at least $|\mathbf{m}|$ poles
in $\mathbf{D}={(D,\dots,D)}$ counting multiplicities, see Lemma 1 in \cite{GS1}.
In those cases when $\mathbf{D}={(D,\dots,D)}$ we say that  a  is a pole of ${\bf f}$ in $D$.

Let $R_0({\bf f})$ be the largest disk in which all the expansions
$f_j, j=1,\ldots,d$ correspond to analytic functions. If $R_0({\bf
f}) =0$, we take $D_{m}({\bf f}) = \emptyset, m \in {\mathbb{Z}}_+$;
otherwise, $R_m({\bf f})$ is the radius of the largest disk
$D_{m}({\bf f})$ centered at the origin to which all the analytic
elements $(f_j, D_0(f_j))$ can be extended so that ${\bf f}$ has at
most $m$ poles counting multiplicities. By
$\mathcal{Q}_m(\mathbf{f})$ we denote the polynomial whose zeros are
the poles of ${\bf f}$ in $D_{m}({\bf f})$ counting multiplicities
and normalized as $Q_{n,{\bf m}}$. This set of poles is denoted by
${\mathcal{P}}_m({\bf f})$.

In \cite{GS1}, Graves-Morris and Saff proved the following analog of
the Montessus de Ballore Theorem for simultaneous approximation.
\vspace{0,2cm}

\noindent {\bf Graves-Morris/Saff Theorem.} {\em Assume that
$R_0({\bf f}) > 0$. Fix a multi-index ${\bf m}\in {\mathbb{Z}}_+^d
\setminus \{\bf 0\}$ and suppose that ${\bf f}$ is polewise
independent with respect to ${\bf m}$ in $D_{|{\bf m}|}({\bf f}), $
then
\begin{equation}\label{inequality3}
 \limsup_{n \to \infty}
\|f_k - R_{n,{\bf m},k}\|_{K}^{1/n} \leq \frac{\|z\|_ {K}}{R_{|{\bf
m}|}({\bf f})}, \quad k=1,\ldots,d,
\end{equation}
where ${K}$ is any compact subset of $ D_{|{\bf m}|}({\bf f})
\setminus {\mathcal{P}}_{|{\bf m}|}({\bf f})$. Additionally
\begin{equation}\label{inequality4}
\limsup_{n \to \infty} \|\mathcal{Q}_{|\mathbf{m}|}(\mathbf{f}) -
Q_{n,{\bf m}}\|^{1/n} \leq \frac{\max\{|\zeta|: \zeta \in
{\mathcal{P}}_{|{\bf m}|}({\bf f})\}}{R_{|{\bf m}|}({\bf f})}.
\end{equation}
}

 It also follows from this result that each pole of ${\bf
f}$ in $D_{|{\bf m}|}({\bf f})$ attracts exactly as many zeros of
$Q_{n,\mathbf{m}}$ as its order when $n$ tends to infinity.

The aim of this paper is to complement and refine some of the statements of
the Graves-Morris/Saff Theorem. For this purpose, in Section 3 we introduce the notion of incomplete Pad\'e approximation and study some of its properties. They are used in Section 4 to obtain our results for row sequences of Hermite-Pad\'e approximation. Section 5 contains examples that illustrate to what extent our main Theorem \ref{saff2} improves the one due to Graves-Morris/Saff. In passing, we mention that incomplete Pad\'e approximants may be used  to study the convergence of other systems of approximating rational (scalar or vector) functions. Section \ref{contenidos} contains some auxiliary results. Our approach is strongly influenced by
the viewpoint of A.A. Gonchar as presented in \cite{gon2} for studying row sequences of Pad\'e approximants.


\section{Convergence in $\sigma$-content}\label{contenidos} Let $B$ be a subset of the complex
plane $\mathbb{C}$. By $\mathcal{U}(B)$ we denote the class of all
coverings of $B$ by at most a numerable set of disks. Set
$$
\sigma(B)=\inf\left\{\sum_{i=1}^\infty
|U_i|\,:\,\{U_i\}\in\mathcal{U}(B)\right\},
$$
where $|U_i|$ stands for the radius of the disk $U_i$. The quantity
$\sigma(B)$ is called the $1$-dimensional Hausdorff content of the
set $B$. This set function is not a measure but it is  semi-additive
and monotonic, properties which will be used later. Clearly, if $B$
is a disk then $\sigma(B)=|B|$.
\begin{defi}\label{defcontenido}
Let $\{\varphi_n\}_{n\in\mathbb{N}}$ be a sequence of functions
defined on a domain $D\subset\mathbb{C}$ and $\varphi$ another
function defined on $D$. We say that
$\{\varphi_n\}_{n\in\mathbb{N}}$ converges in $\sigma$-content to
the function $\varphi$ in compact subsets of $D$ if for each compact
subset $K$ of $D$ and for each $\varepsilon
>0$, we have
$$
\lim_{n\to\infty} \sigma\{z\in K :
|\varphi_n(z)-\varphi(z)|>\varepsilon\}=0.
$$
Such a convergence will be denoted by $\sigma$-$\lim_{n\to\infty}
\varphi_n = \varphi$ in $D$.
\end{defi}

The next lemma was proved by A.A. Gonchar in \cite{gon}.

\vspace{0,2cm} \noindent {\bf Gonchar's Lemma. }{\em Suppose that
$\sigma$-$\lim_{n\to\infty} \varphi_n = \varphi$ in $D$. Then the
following assertions hold true:
\begin{itemize}

\item[i)] If the functions $\varphi_n,\,n\in\mathbb{N}$,
are holomorphic in $D$, then the sequence $\{\varphi_n\}$ converges
uniformly on compact subsets of $D$ and $\varphi$ is holomorphic in
$D$ (more precisely, it is equal to a holomorphic function in $D$
except on a set of $\sigma$-content zero).

\item[ii)] If each of the functions $\varphi_n$ is meromorphic in $D$
and has no more than $k<+\infty$ poles in this domain, then the
limit function $\varphi$ is  (again except on a set of
$\sigma$-content zero) also meromorphic and has no more than $k$
poles in $D$.

\item[iii)] If each function $\varphi_n$ is meromorphic and has no more than $k<+\infty$ poles
 in $D$ and the function $\varphi$ is meromorphic and has exactly $k$ poles in
 $D$, then all $\varphi_n,\,n\ge N$, also have $k$ poles in $D$; the poles of $\varphi_n$
 tend to the poles $z_1,,\dots,z_k$ of $\varphi$ (taking account of their orders) and
 the sequence $\{\varphi_n\}$ tends to $\varphi$ uniformly on compact subsets of
 the domain $D^{\prime}=D\setminus \{z_1,,\dots,z_k\}$.
\end{itemize}
}


\section{Incomplete Pad\'e approximants}\label{incompletos}

\begin{defi}\label{defincompletos}
Let $f$  denote a formal Taylor expansion about the origin. Fix
${m^*} \leq m$. Let $n \geq m$. We say that the rational function
$R_{n,m}$ is an incomplete Pad\'e approximation of type $(n,m,{m^*})
$ corresponding to $f$ if $R_{n,m}$ is the quotient of any two
polynomials $P $ and $Q $ that verify
\begin{itemize}
\item[d.1)]
$\deg P \le n-{m^*},\quad \deg Q \le m,\quad Q \not\equiv 0,$
\item[d.2)] $[Q f-P](z)=
A z^{n+1}+ \cdots .$
\end{itemize}
\end{defi}

Notice that given $(n,m,{m^*}), n \geq m \geq {m^*},$ any one of the
Pad\'e approximants $\pi_{n,{m^*}},\ldots,\pi_{n,m}$ is an incomplete Pad\'e approximation of type $(n,m,{m^*})$
of $f$. The so-called Pad\'e-type approximants (see \cite{bre})
where $m-m^*$ zeros of $Q$ are fixed and $m^*$ are left free are
also incomplete Pad\'e approximants. Moreover, from Definition
\ref{defsimultaneos} and (\ref{incomplete}) it follows that
$R_{n,{\bf m},k}, k=1,\ldots,d,$  is an incomplete Pad\'e
approximation of type $(n,|\mathbf{m}|,m_k)$ with respect to $f_k$.

Given $n \geq m \geq {m^*},$ $R_{n,m}$ is not unique so  we choose one candidate. As before,
after canceling out common factors between $Q$ and $P$, we write
\[ R_{n,m} = P_{n,m}/Q_{n,m},
\]
where, additionally, $Q_{n,m}$ is normalized as in (\ref{normado}).
Suppose that $Q$ and $P$ have a common zero at $z=0$ of order
$\lambda_n$. From d.1)-d.2) readily follows that
\begin{itemize}
\item[d.3)] $ \deg P_{n,m} \leq n-m^*-\lambda_n, \quad
\deg Q_{n,m} \leq m-\lambda_n, \quad Q_{n,m} \not\equiv 0,$
\item[d.4)] $[Q_{n,m} f-P_{n,m}](z)=
A z^{n+1-\lambda_n}+ \cdots .$
\end{itemize}
where $A$ is, in general, a different constant from the one in d.2).

When $f$ denotes a convergent series, it is well known by the
specialists that any row sequence $\{\pi_{n,m}\}_{n \geq m},$ where
$m \ge m^*$ is fixed, converges to $f$ in $\sigma$-content in
compact subsets of $D_{{m^*}}(f)$. This is also true for any
sequence of incomplete Pad\'e approximations when $m \geq {m^*}$ is
fixed. Before giving a formal statement of that result, let us introduce some additional definitions.

Take an arbitrary $\varepsilon > 0$ and define the open set
$J_\varepsilon$ as follows. For $n \geq m$, let $J_{n,\varepsilon}$
denote the $\varepsilon/6mn^2$-neighborhood of the set
${\mathcal{P}}_{n,m} = \{\zeta_{n,1}, \ldots, \zeta_{n,m_n}\}$ of
finite zeros of $Q_{n,m}$. If $R_0(f) > 0$, let
$J_{m-1,\varepsilon}$ denote the $\varepsilon/6m$-neighbor--hood of
the set of poles of $f$ in $D_{m}(f)$. Otherwise,
$J_{m-1,\varepsilon}=\emptyset$. Set $J_\varepsilon = \cup_{n \ge m
-1} J_{n,\varepsilon}$. We have $\sigma(J_{\varepsilon}) <
\varepsilon$ and $J_{\varepsilon_1} \subset J_{\varepsilon_2}$ for $
\varepsilon_1 < \varepsilon_2$. For any set $B \subset {\mathbb{C}}$
we put ${B}(\varepsilon) = {B} \setminus J_{\varepsilon} $.

Let $\{\varphi_n\}_{n\in\mathbb{N}}$ be a sequence of functions
defined on a domain $D$ and $\varphi$ another  function also defined
on $D$. Clearly, if $\{\varphi_n\}_{n\in\mathbb{N}}$ converges
uniformly to $\varphi$ on ${K}(\varepsilon)$ for every compact ${K}
\subset D$ and each $\varepsilon >0$, then
$\sigma$-$\lim_{n\to\infty} \varphi_n = \varphi$ in $D$.

Due to the normalization (\ref{normado}), for any compact set $K$ of
$\mathbb{C}$ and for every $\varepsilon >0$, there exist constants
$C_1, C_2$, independent of $n$, such that
\begin{equation} \label{desig1} \|Q_{n,m}\|_{K} < C_1,
\qquad \min_{z \in {K}(\varepsilon)}|Q_{n,m}(z)| > C_2 n^{-2m},
\end{equation}
where the second inequality is meaningful when ${K}(\varepsilon)$ is
a non-empty set.

In the sequel, $C$ will denote positive constants, generally
different, that are independent of $n$ but may depend on all
the other parameters involved in each formula where they appear.

\begin{propo}\label{contenido}
Let $R_0(f) > 0$. Fix $m$ and ${m^*}$ nonnegative integers, $m \geq
{m^*}$. For each $n \geq m$, let $R_{n,m}$ be an incomplete Pad\'e
approximant of type $(n,m,{m^*})$ for $f$. Then
$$
\sigma\mbox{-}\lim_{n\to\infty} R_{n,m}  = f \;\; \mbox{in}\;\;
D_{m^*}(f).
$$
\end{propo}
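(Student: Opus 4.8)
The plan is to argue by contradiction, exploiting Gonchar's Lemma together with the estimates \eqref{desig1} on the normalized denominators. Suppose the conclusion fails. Then there exist a compact set $K\subset D_{m^*}(f)$, a number $\varepsilon>0$, and a subsequence $\Lambda\subset\mathbb{N}$ along which $\sigma\{z\in K:|R_{n,m}(z)-f(z)|>\varepsilon\}$ stays bounded away from $0$. Passing to a further subsequence, I may assume that the finite zeros $\zeta_{n,1},\dots,\zeta_{n,m_n}$ of $Q_{n,m}$ converge (allowing some to escape to infinity), so that $Q_{n,m}\to Q$ for some monic-type polynomial $Q$ of degree $\le m$ with the normalization \eqref{normado}, uniformly on compact sets; write $\mathcal{P}$ for the finite zero set of $Q$, which has at most $m$ points. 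Fix any disk $D_r=\{|z|<r\}$ with $\overline{D_r}\subset D_{m^*}(f)$.

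The key step is to show that along $\Lambda$ the sequence $\{R_{n,m}\}$ converges in $\sigma$-content to $f$ on $D_r$. Consider $h_{n}:=Q_{n,m}f-P_{n,m}$; by d.4) it has a zero of order $\ge n+1-\lambda_n$ at the origin, and since $\lambda_n\le m-m^*\le m$ and $\deg P_{n,m}\le n-m^*-\lambda_n$, on the circle $|z|=\rho$ (for $\rho<r$, with $f$ analytic there since $\rho<R_{m^*}(f)$ once we remove the poles) one gets by the maximum principle and Cauchy estimates a bound of the form $\|h_n\|_{\overline{D_\rho}}\le C\rho^{\,n+1-m}$ after controlling $\|Q_{n,m}f\|$ on a slightly larger circle where $f$ is bounded away from its $\le m^*$ poles; here is where one uses that $f$ has at most $m^*\le m$ poles in $D_{m^*}(f)$, so after multiplying by a fixed polynomial $\mathcal{Q}_{m^*}(f)$ the function $\mathcal{Q}_{m^*}(f)f$ is analytic on $\overline{D_r}$. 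Thus on $\overline{D_\rho}$,
$$
\Big|\,R_{n,m}(z)-f(z)\,\Big|=\frac{|h_n(z)|}{|Q_{n,m}(z)|}\le \frac{C\,\rho^{\,n+1-m}}{|Q_{n,m}(z)|},
$$
and on the set $\overline{D_\rho}(\varepsilon')$ the denominator is $\ge C' n^{-2m}$ by the second inequality in \eqref{desig1}. Since $\rho<1$ would force geometric decay but we only know $\rho$ can be taken $<r$ and $r$ may exceed $1$, I instead normalize the Taylor data: write $h_n(z)=z^{\,n+1-\lambda_n}g_n(z)$ and estimate $\|g_n\|_{\overline{D_\rho}}$ against $\|h_n\|_{\overline{D_{\rho'}}}$ for $\rho<\rho'<r$, obtaining $\|h_n\|_{\overline{D_\rho}}\le \|h_n\|_{\overline{D_{\rho'}}}(\rho/\rho')^{\,n+1-m}$, and $\|h_n\|_{\overline{D_{\rho'}}}\le C$ uniformly because $\|Q_{n,m}\|$ is bounded, $\mathcal{Q}_{m^*}(f)f$ is bounded on $\overline{D_{\rho'}}$, and $\deg P_{n,m}\le n$ forces $\|P_{n,m}\|_{\overline{D_{\rho'}}}$ to be controlled by the interpolation condition d.4) and the bound on $Q_{n,m}f$. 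This yields $(\rho/\rho')^{\,n+1-m}\to 0$ geometrically, so for each fixed $\rho'<r$ and each $\varepsilon'>0$, $R_{n,m}\to f$ uniformly on $\overline{D_\rho}(\varepsilon')$, hence $R_{n,m}\to f$ in $\sigma$-content on $D_r$, and letting $r\uparrow R_{m^*}(f)$ gives it on all of $D_{m^*}(f)$ — contradicting the choice of $\Lambda$, $K$, $\varepsilon$.

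The main obstacle will be the uniform bound $\|h_n\|_{\overline{D_{\rho'}}}\le C$; because $R_{n,m}$ is only an \emph{incomplete} Padé approximant, there is no rational-function identity or determinantal formula at hand, so one must extract this estimate directly from the linearized conditions d.1)–d.2) (equivalently d.3)–d.4)), being careful that the degree of $P_{n,m}$ is $n-m^*-\lambda_n$ while the interpolation order is $n+1-\lambda_n$, leaving a fixed gap of $m^*+1$ that must be absorbed by the poles of $f$ via the fixed polynomial factor $\mathcal{Q}_{m^*}(f)$. Once that bound is in place, the semi-additivity and monotonicity of $\sigma$, together with the remark preceding \eqref{desig1} that uniform convergence on every $K(\varepsilon)$ implies $\sigma$-convergence, close the argument; Gonchar's Lemma is then invoked only implicitly, through the fact that a $\sigma$-limit of functions with at most $m$ poles is again of that type, which is what legitimizes passing to the convergent subsequence of denominators.
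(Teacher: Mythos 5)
There is a genuine gap, and it sits exactly where you flag ``the main obstacle'': the uniform bound $\|\mathcal{Q}_{m^*}(f)\,(Q_{n,m}f-P_{n,m})\|_{\overline{D}_{\rho'}}\le C$ for $\rho'$ up to $R_{m^*}(f)$ is never established, and it cannot be obtained the way you suggest, namely by bounding $Q_{n,m}f$ and $P_{n,m}$ separately. The only a priori control on $P_{n,m}$ comes from the Taylor coefficients of $Q_{n,m}f$ (d.2) makes $P$ a truncation of the series of $Qf$), and those coefficient bounds control $\|P_{n,m}\|_{\overline{D}_{\rho'}}$ only for $\rho'<R_0(f)$. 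For $R_0(f)<\rho'<R_{m^*}(f)$ — the regime the proposition is really about — a polynomial of degree comparable to $n$ that is bounded on a disk of radius $\rho_0<R_0(f)$ can be as large as $C(\rho'/\rho_0)^n$ on $\overline{D}_{\rho'}$, and this geometric factor destroys your Schwarz-lemma gain $(\rho/\rho')^{n+1-m}$; trying instead to bound $Q_{n,m}f-P_{n,m}$ by the maximum principle on a larger circle requires a bound on $P_{n,m}$ there, which is circular. So the step ``$\deg P_{n,m}\le n$ forces $\|P_{n,m}\|_{\overline{D}_{\rho'}}$ to be controlled by d.4) and the bound on $Q_{n,m}f$'' is not a proof, and without it the whole argument collapses.

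The paper closes this gap by never bounding $P_{n,m}$ at all. By d.4), the function $[\mathcal{Q}_{m^*}(f)Q_{n,m}f-\mathcal{Q}_{m^*}(f)P_{n,m}](z)/z^{n+1-\lambda_n}$ is analytic in $D_{m^*}(f)$; writing its Cauchy integral over $\Gamma_r$, $r<R_{m^*}(f)$, the contribution of $\mathcal{Q}_{m^*}(f)P_{n,m}$ vanishes identically, because by d.3) $\deg\bigl(\mathcal{Q}_{m^*}(f)P_{n,m}\bigr)\le n-\lambda_n$, so the integrand $\mathcal{Q}_{m^*}(f)(\zeta)P_{n,m}(\zeta)\big/\bigl(\zeta^{n+1-\lambda_n}(\zeta-z)\bigr)$ is analytic outside $\Gamma_r$ with a zero of order at least two at infinity. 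What survives is the integral of $\mathcal{Q}_{m^*}(f)Q_{n,m}f$ alone, which is bounded on $\Gamma_r$ by the first inequality in \eqref{desig1}; dividing by $Q_{n,m}(z)$ and using the second inequality in \eqref{desig1} gives directly
$$
\|\mathcal{Q}_{m^*}(f)\,(f-R_{n,m})\|_{K(\varepsilon)}\le C\,\frac{\|z\|_K^n}{r^n}\,n^{2m},
$$
which is precisely the geometric decay your Schwarz-lemma step was meant to produce, and this is where the degree restriction $\deg P\le n-m^*$ (through the factor $\mathcal{Q}_{m^*}(f)$ of degree at most $m^*$) is genuinely used. Note also that once this direct estimate is available, your contradiction framework, the subsequence of convergent denominators, and the appeal to Gonchar's Lemma are superfluous: uniform convergence on every $K(\varepsilon)$ already implies $\sigma$-convergence on compact subsets of $D_{m^*}(f)$, by the remark preceding \eqref{desig1}.
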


\begin{proof}
Let $Q_{m^*}$ denote the polynomial $\mathcal{Q}_{m^*}(f)$ normalized to be monic.
Using d.3), we have
$$
\left[Q_{m^*}Q_{n,m}f-Q_{m^*}P_{n,m}\right](z)= A
z^{n+1-\lambda_n}+\cdots,
$$
which implies that
$$
\frac{\left[Q_{m^*}Q_{n,m}f-Q_{m^*}P_{n,m}\right](z)}{z^{n+1-\lambda_n}}
\in {\mathcal{H}}(D_{m^*}(f)).
$$
Set $|z|<r<R_{m^*}(f)$ with $r$ arbitrarily close to $R_{m^*}(f)$
and let $\Gamma_r=\{z\in\mathbb{C}\,:\,|z|=r\}$. By  Cauchy's
integral formula we obtain
\begin{equation}\label{cauchy}
\begin{array}{l}
\dst \frac{\left[Q_{m^*}Q_{n,m}f-Q_{m^*}P_{n,m}\right](z)}
{z^{n+1-\lambda_n}} = \frac{1}{2\pi i}\int_{\Gamma_r}\frac{[Q_{m^*}
Q_{n,m} f](\zeta)}{\zeta^{n+1-\lambda_n}}\frac{d\zeta}{\zeta-z}+\\ \\
\dst - \int_{\Gamma_r}
\frac{Q_{m^*}(\zeta)P_{n,m}(\zeta)}{\zeta^{n+1-\lambda_n}}
\frac{d\zeta}{\zeta-z}  =\frac{1}{2\pi
i}\int_{\Gamma_r}\frac{[Q_{m^*} Q_{n,m}
f](\zeta)}{\zeta^{n+1-\lambda_n}}\frac{d\zeta}{\zeta-z},
\end{array}
\end{equation}
where the second integral after the first equality is zero due to
the fact that the integrand is an analytic function outside
$\Gamma_r$ with a zero of multiplicity at least two at infinity (see
d.3)).

Fix an arbitrary compact set $K\subset D_{m^*}(f)$ and take
$0<r<R_{m^*}(f)$ such that $K$ and all of the poles of $f$ are
contained in the disk $\{z\in\mathbb{C}\,:\, |z|<r\}$. We also
select an arbitrarily small $\varepsilon>0$.  From \eqref{cauchy} it
follows that
$$
[Q_{m^*}\left(f -R_{n,m}\right)](z)=\frac{z^{n+1-\lambda_n}}{2\pi
i}\int_{\Gamma_r}\frac{[Q_{m^*} Q_{n,m}
f](\zeta)}{Q_{n,m}(z)\,\zeta^{n+1-\lambda_n}}\frac{d\zeta}{\zeta-z},
$$
for all $z\in K(\varepsilon)$. Using this last formula,
\eqref{desig1}, and the continuity of $Q_{m^*}f$ on $\Gamma_r$, we
obtain
$$
\|Q_{m^*} \left(f-R_{n,m}\right)\|_{K(\varepsilon)}\le
C\,\frac{\|z\|_{K}^{n}}{r^n} \frac{\|Q_{n,m}\|_{K}}{\dst\min_{\zeta
\in {K}(\varepsilon)}|Q_{n,m}(\zeta)|}\le
C\,\frac{\|z\|_{K}^{n}}{r^n}\,n^{2m}.
$$
Taking $n$-th root, making $n$ tend to infinity, and letting
$r$ approach $R_{m^*}(f)$, we arrive at
\[
\limsup_{n\to\infty}\|Q_{m^*}
\left(f-R_{n,m}\right)\|^{1/n}_{K(\varepsilon)}\le
\frac{\|z\|_K}{R_{m^*}(f)}<1.
\]
As $\varepsilon>0$ is arbitrary, we have proved
$\sigma$-$\lim_{n\to\infty} Q_{m^*}R_{n,m} = Q_{m^*}f$ in
$D_{m^*}(f)$, which is equivalent to the statement we wanted to
prove.
\end{proof}
Let us find the radius of the largest disk centered at the origin in
compact subsets of which the sequence $\{R_{n,m}\}_{n \geq m}$
converges to $f$ in $\sigma$-content. This number, which depends on
the specific sequence of incomplete Pad\'e approximants considered,
lies between $R_{m^*}(f)$ and $R_{m}(f)$ (see Section \ref{ejemplo1}
below). We need some formulas.

\begin{lem} \label{telescopica}
Let a formal power series $(\ref{i1})$ be given. Fix $m \ge m^*$ two
positive integers. Consider a corresponding sequence of incomplete
Pad\'e approximations. For each $n \ge m$, we have
\[ R_{n+1,m}(z) - R_{n,m}(z) = \frac{A_{n,m} z^{n+1-\lambda_n-
\lambda_{n+1}}q_{n,m-m^*}(z)}{Q_{n,m}(z)Q_{n+1,m}(z)},
\]
where $A_{n,m}$ is some constant and $q_{n,m-m^*}$ is a polynomial
of degree less than or equal to $m - m^*$ normalized as in
$(\ref{normado})$.
\end{lem}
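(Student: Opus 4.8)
The plan is to compute the difference $R_{n+1,m} - R_{n,m}$ directly by putting the two rational functions over a common denominator and analyzing the numerator. Write $R_{n,m} = P_{n,m}/Q_{n,m}$ and $R_{n+1,m} = P_{n+1,m}/Q_{n+1,m}$, so that
\[
R_{n+1,m}(z) - R_{n,m}(z) = \frac{P_{n+1,m}(z)Q_{n,m}(z) - P_{n,m}(z)Q_{n+1,m}(z)}{Q_{n,m}(z)Q_{n+1,m}(z)}.
\]
Call the numerator $N(z)$. The first task is to bound $\deg N$. Using d.3), $\deg P_{n+1,m} \le n+1-m^*-\lambda_{n+1}$ and $\deg Q_{n,m} \le m-\lambda_n$, so $\deg(P_{n+1,m}Q_{n,m}) \le n+1+m-2m^*-\lambda_n-\lambda_{n+1}$; by symmetry the other product has degree at most $n+m-2m^*-\lambda_n-\lambda_{n+1}$ — wait, one must be slightly careful, since the roles of $n$ and $n+1$ are not symmetric, but in any case $\deg N \le n+1+m-2m^*-\lambda_n-\lambda_{n+1}$.

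Next I would identify the low-order vanishing of $N$ at the origin. From d.4), $Q_{n,m}f - P_{n,m}$ has a zero of order at least $n+1-\lambda_n$ at $z=0$, and likewise $Q_{n+1,m}f - P_{n+1,m}$ vanishes to order at least $n+2-\lambda_{n+1}$. Forming the combination
\[
N = P_{n+1,m}Q_{n,m} - P_{n,m}Q_{n+1,m} = Q_{n,m}(P_{n+1,m} - Q_{n+1,m}f) - Q_{n+1,m}(P_{n,m} - Q_{n,m}f),
\]
each term on the right is a polynomial times a series vanishing to order at least $n+1-\lambda_n-\lambda_{n+1}$ (after accounting for the possible extra factor $z^{\lambda}$ absorbed into the $Q$'s — here one checks that the minimal order of vanishing of $N$ is $n+1-\lambda_n-\lambda_{n+1}$, using that $\lambda_n + \lambda_{n+1}$ is exactly the multiplicity of the common zero at the origin that gets cancelled). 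Hence $N(z) = z^{n+1-\lambda_n-\lambda_{n+1}} \widetilde q(z)$ for some polynomial $\widetilde q$, and comparing with the degree bound gives $\deg \widetilde q \le m - m^*$. Writing $\widetilde q = A_{n,m}\, q_{n,m-m^*}$ with $q_{n,m-m^*}$ normalized as in \eqref{normado} and $A_{n,m}$ the appropriate constant yields the claimed formula.

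The main obstacle I anticipate is the bookkeeping around the $\lambda_n$ exponents: one must verify that the apparent zero of $N$ at the origin of order $n+1-\lambda_n-\lambda_{n+1}$ is not artificially reduced by cancellation and that it meshes correctly with the degree count so that exactly $m-m^*$ free zeros remain in $q_{n,m-m^*}$. The cleanest way to handle this is to work first with the \emph{un-reduced} polynomials $P, Q$ from d.1)–d.2) (before cancelling the factor $z^{\lambda_n}$), where the orders in d.2) are clean ($z^{n+1}$), carry out the difference computation there, and only at the end divide numerator and denominator by the appropriate powers of $z$ to pass to the normalized representatives $P_{n,m}, Q_{n,m}$; this reduces the $\lambda$-accounting to a single transparent factorization step.
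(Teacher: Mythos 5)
Your argument is essentially the paper's proof: form the cross-difference $Q_{n,m}P_{n+1,m}-Q_{n+1,m}P_{n,m}$, use d.4) to see it vanishes at the origin to order at least $n+1-\lambda_n-\lambda_{n+1}$, use d.3) to bound its degree, and conclude that after factoring out $z^{n+1-\lambda_n-\lambda_{n+1}}$ only a polynomial of degree at most $m-m^*$ remains, which is then normalized; the paper merely multiplies the two defining relations by $z^{\lambda_{n+1}}Q_{n+1,m}$ and $z^{\lambda_n}Q_{n,m}$ first (the ``un-reduced'' bookkeeping you suggest at the end), which is the same computation. One arithmetic slip to fix: $\deg\bigl(P_{n+1,m}Q_{n,m}\bigr)\le (n+1-m^*-\lambda_{n+1})+(m-\lambda_n)=n+1+m-m^*-\lambda_n-\lambda_{n+1}$, not $n+1+m-2m^*-\lambda_n-\lambda_{n+1}$ as written (and similarly for the other product); it is this correct bound, not the one you state, that yields $\deg\widetilde q\le m-m^*$ after dividing by $z^{n+1-\lambda_n-\lambda_{n+1}}$.
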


\begin{proof} Using d.4) we have
\[ z^{\lambda_n}[Q_{n,m}f - P_{n,m}](z) = A z^{n+1 } + \cdots
\]
and
\[ z^{\lambda_{n+1}}[Q_{n+1,m}f - P_{n+1,m}](z) = A^\prime z^{n+2} + \cdots.
\]
Multiplying the first equation by $z^{\lambda_{n+1}}Q_{n+1,m}$, the
second by $z^{\lambda_n}Q_{n,m}$, and deleting one of the equations
so obtained from the other, it follows that
\[ z^{\lambda_n +\lambda_{n+1}}[Q_{n,m}P_{n+1,m} - Q_{n+1,m}P_{n,m}](z) =
B z^{n+1} + \cdots.
\]
Taking into consideration d.3) we see that on the left-hand side we
have a polynomial of degree $ \leq n+1 + m-m^*$. Consequently,
\[ z^{\lambda_n +\lambda_{n+1}}[Q_{n,m}P_{n+1,m} - Q_{n+1,m}P_{n,m}](z) =
z^{n+1} \widetilde{q}_{n,m-m^*},
\]
where $\deg \widetilde{q}_{n,m-m^*} \leq m-m^* $. Dividing by
$z^{\lambda_n +\lambda_{n+1}}Q_{n,m}Q_{n+1,m}$ and normalizing
$\widetilde{q}_{n,m-m^*}$ as in (\ref{normado}) we obtain the
desired formula.
\end{proof}

Take an arbitrary $\varepsilon > 0$ and define the open set
$J_\varepsilon^{\prime}$ as follows.  For $n \geq m$, let
$J_{n,\varepsilon}^{\prime}$ denote the
$\varepsilon/6mn^2$-neighborhood of the set of zeros of
$q_{n,m-m^*}$. Set $J_\varepsilon^{\prime} = \cup_{n \ge m}
J_{n,\varepsilon}^{\prime}$. For any compact set ${K} \subset
{\mathbb{C}}$ we put ${K}^{\prime}(\varepsilon) = {K} \setminus
J_{\varepsilon}^{\prime} $.

Due to the fact that the polynomial $q_{n,m-m^*}$ is normalized as
in (\ref{normado}), for any compact set $K$ of $\mathbb{C}$ and for
every $\varepsilon
>0$, there exist constants $M_1, M_2$, independent of $n$, such that
\begin{equation} \label{desig2}
 \|q_{n,m-m^*}\|_{K} < M_1,\qquad\min_{z \in
{K}^{\prime}(\varepsilon)} |q_{n,m-m^*}(z)| > M_2 n^{-2m},
\end{equation}
where the second inequality is meaningful when
${K}^{\prime}(\varepsilon)$ is a non-empty set.

Define
\begin{equation}\label{radio}
{R}_{m}^*(f) =\frac{1}{\dst\limsup_{n\to\infty}
|A_{n,m}|^{1/n}},\qquad {D}_{m}^*(f)=\left\{z\,:\,|z|<{R}_{m}^*(f)
\right\}.
\end{equation}

\begin{thm} \label{teo:5} Let $f$ be a formal power series as in \eqref{i1}. Fix $m$
and ${m^*}$ nonnegative integers, $m \geq {m^*}$. Let
$\{R_{n,m}\}_{n \geq m}$ be a sequence of incomplete Pad\'e
approximants of type $(n,m,{m^*})$ for $f$. If ${R}_{m}^*(f) > 0$
then $R_0(f) > 0$. Moreover,
\[D_{m^*}(f) \subset D_m^*(f) \subset D_m(f)\]
and $D_{m}^*(f)$ is the largest disk in compact subsets of which
$\sigma$-$\lim_{n\to \infty} R_{n,m} = f$. Moreover, the sequence
$\{R_{n,m}\}_{n \geq m}$ is pointwise divergent in $\{z : |z| >
R_{m}^*(f)\}$ except on a set of $\sigma$-content zero.
\end{thm}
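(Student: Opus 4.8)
The plan is to use the telescoping formula from Lemma \ref{telescopica} together with Proposition \ref{contenido} and Gonchar's Lemma to pin down exactly the disk of $\sigma$-convergence. First I would establish that $R_m^*(f)>0$ forces $R_0(f)>0$: from the telescoping formula $R_{n+1,m}-R_{n,m} = A_{n,m} z^{n+1-\lambda_n-\lambda_{n+1}} q_{n,m-m^*}/(Q_{n,m}Q_{n+1,m})$, if $R_m^*(f)>0$ then $\limsup|A_{n,m}|^{1/n}<\infty$; combining the estimates \eqref{desig1} on $|Q_{n,m}|$ from below on $K(\varepsilon)$, the bound \eqref{desig2} on $|q_{n,m-m^*}|$, and the normalizations, one sees that the partial sums $R_{m,m}+\sum_{k=m}^{n-1}(R_{k+1,m}-R_{k,m})$ converge geometrically in $\sigma$-content on compact subsets of $D_m^*(f)$, so by Gonchar's Lemma the $\sigma$-limit is meromorphic with at most $m$ poles there; since each $R_{n,m}$ is an incomplete Padé approximant (hence interpolates $f$ at the origin to high order) the limit must agree with $f$ near $0$, forcing $R_0(f)>0$ and $D_m^*(f)\subset D_m(f)$. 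The inclusion $D_{m^*}(f)\subset D_m^*(f)$ I would get directly from Proposition \ref{contenido}: on any compact $K\subset D_{m^*}(f)$ the series telescopes and, since $\sigma\text{-}\lim R_{n,m}=f$ there with geometric rate $\|z\|_K/R_{m^*}(f)$ (as extracted from the proof of Proposition \ref{contenido}), the general term $R_{n+1,m}-R_{n,m}$ tends to zero geometrically in $\sigma$-content, which via the explicit formula and the lower bounds on $|Q_{n,m}Q_{n+1,m}|$, $|q_{n,m-m^*}|$ translates into $\limsup|A_{n,m}|^{1/n}\le 1/R_{m^*}(f)$, i.e. $R_m^*(f)\ge R_{m^*}(f)$.

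Next I would prove that $D_m^*(f)$ is exactly the disk of $\sigma$-convergence. That $\sigma\text{-}\lim R_{n,m}=f$ on compact subsets of $D_m^*(f)$ follows from the geometric convergence of the telescoping series just described (using that inside $|z|<r<R_m^*(f)$ the factor $|z|^{n+1-\lambda_n-\lambda_{n+1}}|A_{n,m}|$ decays like $(r\,(1-\delta))^n$ for suitable $\delta$, against only polynomial growth $n^{4m}$ from the denominators on $K(\varepsilon)$) combined with Gonchar's Lemma part (ii) and the identification of the limit as $f$. For maximality, suppose the sequence converged in $\sigma$-content on a disk $\{|z|<\rho\}$ with $\rho>R_m^*(f)$; then on any compact subset avoiding the relevant exceptional sets the differences $R_{n+1,m}-R_{n,m}$ would tend to zero, but by picking a point $z_0$ with $R_m^*(f)<|z_0|<\rho$ lying outside all the small neighborhoods $J_\varepsilon$ and $J_\varepsilon'$ (possible since those have total $\sigma$-content $<\varepsilon$), the explicit formula gives $|R_{n+1,m}(z_0)-R_{n,m}(z_0)| \ge |A_{n,m}|\,|z_0|^{n+1-\lambda_n-\lambda_{n+1}} M_2 n^{-2m}/C_1^2$, and taking $n$-th roots along a subsequence realizing the $\limsup$ in \eqref{radio} yields $\limsup \ge |z_0|/R_m^*(f)>1$, contradicting convergence. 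The same lower estimate shows the sequence is pointwise divergent at every $z$ with $|z|>R_m^*(f)$ outside the $\sigma$-content-zero set $\bigcap_\varepsilon (J_\varepsilon\cup J_\varepsilon')$.

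The main obstacle I anticipate is the bookkeeping with the exceptional sets: the lower bounds on $|Q_{n,m}|$ and $|q_{n,m-m^*}|$ in \eqref{desig1}–\eqref{desig2} only hold off the neighborhoods $J_{n,\varepsilon}$, $J_{n,\varepsilon}'$, whose radii shrink like $\varepsilon/6mn^2$, so one must be careful that the polynomial loss $n^{-2m}$ (or $n^{-4m}$ after multiplying two such denominators) is harmless after taking $n$-th roots — which it is — but also that the union over $n$ still has small $\sigma$-content, which is exactly why the radii are chosen summable-after-scaling; this is the delicate point where the normalization \eqref{normado} and the specific choice of neighborhood radii in the definitions of $J_\varepsilon$ and $J_\varepsilon'$ are essential. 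A secondary subtlety is justifying that the $\limsup$ defining $R_m^*(f)$ genuinely controls the divergence, i.e. that one cannot have cancellation making the sequence converge at some $|z_0|>R_m^*(f)$ despite infinitely many large $A_{n,m}$; here the point is that $R_{n+1,m}-R_{n,m}$ has a single sign pattern forced by the formula and the $q_{n,m-m^*}$, $Q_{n,m}$ being uniformly bounded above, so no such cancellation can occur on a subsequence where $|A_{n,m}|^{1/n}\to 1/R_m^*(f)$.
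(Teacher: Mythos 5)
Your first half follows the paper's own route (telescoping the differences from Lemma \ref{telescopica}, using \eqref{desig1}--\eqref{desig2} to get geometric convergence on $K(\varepsilon)$ for $K\subset D_m^*(f)$ and term-wise blow-up for $|z|>R_m^*(f)$ off $\cap_\varepsilon J_\varepsilon'$), and your derivation of $D_{m^*}(f)\subset D_m^*(f)$ by bounding $\limsup|A_{n,m}|^{1/n}\le 1/R_{m^*}(f)$ from the rate in Proposition \ref{contenido} at a point avoiding the exceptional sets is a legitimate variant of what the paper does (it uses that very argument later, in the proof of Theorem \ref{saff2}; the paper here argues instead that divergence outside $D_m^*(f)$ plus convergence on $D_{m^*}(f)$ forces the inclusion). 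A small remark: your worry about ``cancellation'' for $|z_0|>R_m^*(f)$ is unnecessary, and the ``single sign pattern'' justification is not correct as stated (these are complex quantities); the right observation is simply that the partial sums of the telescoping series are exactly $R_{N,m}(z_0)-R_{n_0,m}(z_0)$, so convergence of the sequence is equivalent to convergence of the series, and a series whose general term is unbounded along a subsequence diverges.

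The genuine gap is the implication $R_m^*(f)>0\Rightarrow R_0(f)>0$, equivalently the identification of the $\sigma$-limit $\varphi$ with $f$, which you dispatch with ``since each $R_{n,m}$ interpolates $f$ at the origin to high order the limit must agree with $f$ near $0$.'' Here $f$ is only a formal power series, so ``agree with $f$ near $0$'' has no meaning until one proves the series converges, and the interpolation conditions d.4) only say that the Taylor coefficients of $R_{n,m}$ at $0$ match the formal coefficients $\phi_k$ up to order $n+m-\lambda_n$; this does not transfer to the limit because the $\sigma$-convergence need not be uniform near $0$: some poles $\zeta_{n,j}$ of $R_{n,m}$ may accumulate at the origin, so the Taylor coefficients of $R_{n,m}$ at $0$ and the Laurent coefficients of $\varphi$ near $0$ can a priori differ by the residue contributions of those poles, and $\varphi$ could a priori have a pole at $0$. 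Closing this is the core of the paper's proof: pass to a subsequence along which the poles converge, isolate the $\ell$ poles tending to $0$, compare the Taylor coefficients $\alpha_{n,k}=\phi_k$ with the Laurent coefficients $\beta_{n,k}$ on a small circle via \eqref{eq:34}, and use the relation \eqref{eq:36} with the polynomials $L_n(z)=\prod_{j=1}^{\ell}(1-\zeta_{n,j}z)$ (whose coefficients $\gamma_{n,j}\to 0$ precisely because $\zeta_{n,j}\to 0$) to force $\varphi_k=\phi_k$ for all $k$, in particular that the negative-index coefficients vanish, whence $\varphi$ is analytic at $0$, equals the sum of the series, and $R_0(f)>0$. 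Without some argument of this type both the statement ``converges to $f$'' on $D_m^*(f)$ and the inclusion $D_m^*(f)\subset D_m(f)$ (which needs $\varphi$ to be a meromorphic continuation of $f$) remain unproved in your proposal.
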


\begin{proof} According to Lemma \ref{telescopica}
\begin{equation}\label{telescopic1}
R_{n+1,m}(z) - R_{n,m}(z) = \frac{A_{n,m}
z^{n+1-\lambda_n-\lambda_{n+1}}q_{n,m-m^*}(z)}{Q_{n,m}(z)\,Q_{n+1,m}(z)}.
\end{equation}
Considering telescopic sums, it follows that the sequence
$\{R_{n,m}\}_{n \geq m}$ converges or diverges with the series
$$
\sum_{n \geq n_0} \frac{A_{n,m} z^{n+1-\lambda_n-
\lambda_{n+1}}q_{n,m-m^*}(z)}{Q_{n,m}(z)\,Q_{n+1,m}(z)},
$$
where $n_0$ is chosen conveniently so that $Q_{n_0,m}(z) \neq 0$ at
the specific point under consideration.

Let $R_{m}^*(f) > 0$ and ${K} \subset D_{m}^*(f)$. Fix $\varepsilon
> 0$. Using (\ref{desig1}) and (\ref{desig2}), we have
\begin{equation}\label{telescopic2}
\limsup_{n \to \infty} \left\| \frac{A_{n,m}
z^{n+1-\lambda_n-\lambda_{n+1}}
q_{n,m-m^*}(z)}{Q_{n,m}(z)\,Q_{n+1,m}(z)}
\right\|_{{K}(\varepsilon)}^{1/n} \leq
\frac{\|z\|_{{K}}}{R_{m}^*(f)} < 1.
\end{equation}
Therefore, the series converges uniformly on ${K}(\varepsilon)$ for
every ${K} \subset D_{m}^*(f)$ and every $\varepsilon >0$. Thus $
\sigma$-$\lim_{n \to \infty} R_{n,m} = \varphi$ in $ D_{m}^*(f)$,
where, according to Gonchar's Lemma, $\varphi$ is (except on a set
of $\sigma$-content zero) a meromorphic function with at
most $m$ poles in $D_m^*(f)$. On the other hand, if $|z| >
R_{m}^*(f)$ and $z \not\in J_{\varepsilon}^{\prime}$ from
(\ref{desig1}) and (\ref{desig2}) it follows that
\[ \limsup_{n \to \infty} \left| \frac{A_{n,m} z^{n+1-\lambda_n-
\lambda_{n+1}}q_{n,m-m^*}(z)}{Q_{n,m}(z)\,Q_{n+1,m}(z)}
\right|^{1/n} \geq \frac{|z|}{R_{m}^*(f)} > 1,
\]
and the series diverges. Thus, the sequence $\{R_{n,m}\}_{n
\geq m^*}$ pointwise diverges in $\{z : |z| > R_{m}^*(f)\}$ except
on a set of $\sigma$-content zero (namely, $\cap_{\varepsilon >0} J'_{\varepsilon}$).

We conclude the proof of the theorem if we show that  $R_m^*(f) > 0$
implies that $R_{0}(f) > 0$. Indeed, if this is true, then
necessarily $\varphi = f$ in $D_{m}^*(f)$ since by Proposition
\ref{contenido}, $f$ is the $\sigma$-limit of $\{R_{n,m}\}_{n \geq
m}$ at least in compact subsets of $D_{m^*}(f)$. Since $D_{m}^*(f)$
is the largest disk centered at the origin in compact subsets of
which $\{R_{n,m}\}_{n\geq m}$ converges to $f$ in $\sigma$-content,
we get that $D_{m^*}(f) \subset D_{m}^*(f)$. On the other hand,
$D_{m}(f)$ is the largest disk centered at the origin in which $f$
admits a meromorphic extension with no more than $m$ poles,
therefore $D_{m}^*(f) \subset D_{m}(f)$.

Let $R_m^*(f) >0$, then $\sigma$-$\lim_{n \to \infty} R_{n,m} =
\varphi$ in $D_{m}^*(f)$, where $\varphi$ has at most $m$ poles in
this disk. Choose a subsequence of indices $\Lambda \subset
{\mathbb{N}}$ such that for all $n \in \Lambda$ the number of poles
of $R_{n,m}$ is exactly equal to $m_0,\, m_0\leq m$, and $\lim_{n
\in \Lambda} \zeta_{n,j} = z_j, j=1,\ldots,m_0$. Suppose that $\ell$
of the points $z_j$ equal zero and let $U$ be a neighborhood of
$z=0$ that does not contain any $z_j$ other than zero and is
contained in $D_m^*(f)$. From Gonchar's Lemma it follows that
$\lim_{n \in \Lambda} R_{n,m} = \varphi$ uniformly on each compact
subset of $U^* = U \setminus \{0\}$, where $\varphi$ is holomorphic
in $U^*$, and its Laurent expansion in $U^*$ has the form
\[ \varphi(z) = \sum_{k = - \ell}^{\infty} \varphi_k z^k.
\]
If we show that $\varphi_k =0, k=-\ell,\ldots,-1$, and $\varphi_k =
\phi_k , k \geq 0$, we obtain that $\varphi$ is analytic in $U$ and
coincides with  $f$ in that set. In consequence, $R_0(f) >0$.

Choose $r > 0$ such that $\Gamma = \{z : |z|= r\}$ belongs to $U^*$.
For all sufficiently large $n \in \Lambda$ the points $\zeta_{n,j},
j=1,\ldots,\ell,$ are inside $\Gamma$ and the points $\zeta_{n,j},
j=\ell +1,\ldots,m^*,$ are outside this curve. From now on we only
consider such $n$'s. Let us compare the Taylor expansion of
$R_{n,m}$ about $z=0$
\[ R_{n,m}(z) = \sum_{k=0}^{\infty} \alpha_{n,k}z^k,
\]
with its Laurent expansion on $\Gamma$,
\[ R_{n,m}(z) = \sum_{k=-\infty}^{\infty} \beta_{n,k} z^k.
\]
For notational convenience we set $\phi_k = 0$ and $\alpha_{n,k} =0$
for $k=-1,-2,\ldots$ and $\varphi_k =0$ for $k = -\ell
-1,-\ell-2,\ldots$ We restrict our attention to the case when all
$\zeta_{n,k},\, k=1,\ldots,\ell$, are distinct. The general case is
proved analogously with some additional technical difficulties.

Let $c_{n,j}, j = 1,\ldots,\ell,$ be the residue of $R_{n,m}$ at
$\zeta_{n,j}$. The Taylor expansion of $R_{n,m}$ about $z=0$ and its
Laurent expansion on $\Gamma$ differ only because of the expansion
of the fractions $c_{n,j}/(z-\zeta_{n,j}), j=1,\ldots,\ell$.
Therefore, it is easy to verify that
\begin{equation} \label{eq:34} \beta_{n,k} - \alpha_{n,k} =
\sum_{j=1}^{\ell} \frac{c_{n,j}}{\zeta_{n,j}^{k+1}}, \qquad k \in
\mathbb{Z}.
\end{equation}
By the definition of $R_{n,m}$ (in particular, see d.4)),
$\alpha_{n,k} = \phi_k$ for $k < n+m-\lambda_n$; therefore, $\lim_{n
\in \Lambda} \alpha_{n,k} = \phi_k, k \in {\mathbb{Z}}$. On the
other hand, from the uniform convergence of $R_{n,m}$ to $\varphi$
on $\Gamma$ we also have that $\lim_{n \in \Lambda}\beta_{n,k} =
\varphi_k, k \in {\mathbb{Z}}$. We obtain
\begin{equation} \label{eq:35}
\lim_{n \in \Lambda} (\beta_{n,k} - \alpha_{n,k}) = \varphi_k -
\phi_k, \qquad k \in {\mathbb{Z}}.
\end{equation}
Set $ \varepsilon_{n,k}= \beta_{n,k} - \alpha_{n,k}
$
and
\[ L_{n}(z)= \prod_{j=1}^{\ell} (1 - \zeta_{n,j}z) = 1 + \gamma_{n,1}z +
\cdots+ \gamma_{n,\ell} z^{\ell}.
\]
Using (\ref{eq:34}), for arbitrary $k \in {\mathbb{Z}}$, we obtain
\begin{equation} \label{eq:36}
\varepsilon_{n,k} + \gamma_{n,1}\varepsilon_{n,k+1} +\cdots+
\gamma_{n,\ell} \varepsilon_{n,k+\ell} = \sum_{j=1}^{\ell}
\frac{c_{n,j}}{\zeta_{n,j}^{k+1}} L_{n}(\zeta_{n,j}^{-1}) =0.
\end{equation}
Since $\lim_{n \in \Lambda} \gamma_{n,j} = 0, j=1,\ldots,\ell,$ and
$\lim_{n \in \Lambda} \varepsilon_{n,k+j} = \varphi_{k+j} -
\phi_{k+j}, j=1,\ldots,\ell$, from (\ref{eq:36}) it follows that
$\lim_{n \to \infty} \varepsilon_{n,k} =0$. Using (\ref{eq:35}) we
obtain that $\varphi_k = \phi_k, k \in {\mathbb{Z}},$ as we wanted
to prove.
\end{proof}

Next, we will prove that each pole of the function $f$ in
$D^*_m(f)$ attracts, with geometric rate, at least as many zeros of
$Q_{n,m}$ as its order. For this purpose, let us define two indicators of the
asymptotic behavior of the poles of the incomplete Pad\'e
approximants. These indicators  were first introduced by A.A.
Gonchar in \cite{gon2} for the study of inverse type theorems for
row sequences of Pad\'e approximants. Let
\[ {\mathcal{P}}_{n,m} = \{\zeta_{n,1},\ldots,\zeta_{n,\nu_n}\},
\quad n \in {\mathbb{N}}, \quad \nu_n \leq m,
\]
denote the  collection of zeros of $Q_{n,m}$ (repeated according to
their multiplicity). It is easy to verify that $|\cdot|_1 :
{\mathbb{C}}^2 \longrightarrow {\mathbb{R}}_+$ given by
\[ |z -\omega|_1 = \min\{1, |z -\omega|\}, \qquad z,\omega \in {\mathbb{C}},
\]
defines a distance in $\mathbb{C}$ (although $|\cdot|_1$ does not define
a norm in $\mathbb{C}$).

Choose a point $a \in {\mathbb{C }}$. The first indicator is defined
by
\[
\Delta(a) = \limsup_{n \to \infty} \prod_{j=1}^{\nu_n} |\zeta_{n,j}
- a|_1^{1/n}\, = \limsup_{n \to \infty} \prod_{|\zeta_{n,j} - a| <
1} |\zeta_{n,j} - a|^{1/n}.
\]
Obviously, $0 \leq \Delta(a) \leq 1$ (when $\nu_n =0$ the product is
taken to be $1$). The second indicator, a nonnegative integer
$\mu(a)$, is defined as follows. We suppose that for each $n$ the
points in ${\mathcal{P}}_{n,m}$ are enumerated in nondecreasing distance to the point $a$. We put
\begin{equation} \label{indic2}
\delta_j(a) = \limsup_{n \to \infty} |\zeta_{n,j} -a|_1^{1/n}.
\end{equation}
These numbers are defined by (\ref{indic2}) for $j=1,\ldots,m', m' =
\liminf_{n\to \infty}  \nu_n$; for $j=m'+1,\ldots,m$ we define
$\delta_j(a) =1$. We have $0 \leq \delta_j(a) \leq 1$. If $\Delta(a)
=1$ (in that case all $\delta_j(a) =1$), then $\mu(a) =0$. If
$\Delta(a) < 1$, then for some $\mu, 1 \leq \mu \leq m$, we have
that $\delta_1(a) \leq \cdots \leq \delta_{\mu}(a) < 1$ and
$\delta_{\mu +1}(a) =1$ or $\mu =m$; in this case we take $\mu(a) =
\mu$.

Clearly, $\Delta(a) <1 \Leftrightarrow \mu(a) \geq 1$ and $\sum_{a
\in {\mathbb{C}}} \mu(a)\leq m$. We shall need $\Delta(a)$ and
$\mu(a)$ only for points $a \in {\mathbb{C}}^* = {\mathbb{C}}
\setminus \{0\}$. It is easy to verify that
\begin{equation} \label{eq:Delta} \Delta(a) = \limsup_{n \to \infty}
|Q_{n,m}(a)|^{1/n}.
\end{equation}

\begin{thm}\label{convergence}
Let $R_0(f) > 0$. Fix $m$ and ${m^*}$ nonnegative integers, $m \geq
{m^*}$. For each $n \geq m$, let $R_{n,m}$ be an incomplete Pad\'e
approximant of type $(n,m,{m^*})$ for $f$. Let a be a pole of $f$ in
$D^*_{m}(f)$ of order $\tau$. Then
$$
\Delta(a)\le \frac{|a|}{R^*_{m}(f)}\;\quad \mbox{and}\;\quad \mu(a)
\geq \tau.
$$
\end{thm}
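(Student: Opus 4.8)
The plan is to localize near the pole $a$ and use the telescopic series from Lemma~\ref{telescopica} together with the convergence already established in Theorem~\ref{teo:5}. First I would handle $\Delta(a)$. Since $a$ is a pole of $f$ in $D_m^*(f)$ and, by Theorem~\ref{teo:5}, $\sigma$-$\lim_{n\to\infty} R_{n,m} = f$ in $D_m^*(f)$, pick $\rho>0$ so small that the closed disk $\overline{U}=\{|z-a|\le\rho\}$ lies in $D_m^*(f)$ and contains no pole of $f$ other than $a$. On a small circle $\gamma=\{|z-a|=\rho'\}$ with $\rho'<\rho$, the function $f$ has a Laurent expansion with nontrivial principal part of order $\tau$, so $\|f\|_\gamma \ge c>0$; meanwhile $R_{n,m}\to f$ uniformly on $\gamma$ after removing the exceptional neighborhoods, so $\|R_{n,m}\|_{\gamma(\varepsilon)}$ stays bounded below. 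I then want to compare this with the estimate for $R_{n,m}$ itself coming from the telescopic series: writing $R_{n,m}=R_{n_0,m}+\sum_{k=n_0}^{n-1}(R_{k+1,m}-R_{k,m})$ and using Lemma~\ref{telescopica} plus the lower bound $|Q_{n,m}(z)|>C_2 n^{-2m}$ on $\gamma(\varepsilon)$ from \eqref{desig1}, one sees that on $\gamma(\varepsilon)$ the size of $R_{n,m}(z)$ is controlled by $|A_{n,m}|\,\rho'^{\,n}/(\min|Q_{n,m}|\min|Q_{n+1,m}|)$-type terms; since this must not decay faster than the constant $c$, one gets $\limsup_n |A_{n,m}|^{1/n}\,|a|\ge 1$ up to the $\rho'$-factor, and letting $\rho'\to 0$ together with \eqref{eq:Delta} yields $\Delta(a)=\limsup_n|Q_{n,m}(a)|^{1/n}\le |a|/R_m^*(f)$. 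The cleanest route is probably: on $\gamma(\varepsilon)$, from $R_{n+1,m}-R_{n,m}=A_{n,m}z^{n+1-\lambda_n-\lambda_{n+1}}q_{n,m-m^*}(z)/(Q_{n,m}Q_{n+1,m})$ and the fact that $|Q_{n,m}(z)|$ is comparable to $|Q_{n,m}(a)|$ for $z$ near $a$ (since the zeros of $Q_{n,m}$ inside a neighborhood of $a$ are few and one can absorb them), deduce $\limsup_n|Q_{n,m}(a)Q_{n+1,m}(a)|^{1/n}|R_{n+1,m}-R_{n,m}|^{1/n}_{\gamma(\varepsilon)} \le \Delta(a)^2 \cdot (\text{something}\le 1)$ only if $|a|/R_m^*(f)\ge\Delta(a)$; more directly, evaluate the telescopic sum against the known non-vanishing of $f$ near $a$.

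For the second assertion $\mu(a)\ge\tau$, the strategy is an argument by contradiction combined with a normal-families/subsequence extraction, exactly in the spirit of the final part of the proof of Theorem~\ref{teo:5}. Suppose $\mu(a)=\mu<\tau$. Pass to a subsequence $\Lambda$ along which the number of zeros of $Q_{n,m}$ in a fixed small punctured neighborhood of $a$ stabilizes at exactly $\mu$ (this is possible by the definition of $\mu(a)$: only $\mu$ of the $\delta_j(a)$ are $<1$, so only $\mu$ zeros can approach $a$ at a geometric rate, and after shrinking the neighborhood the remaining zeros stay uniformly away). On a punctured neighborhood $U^*$ of $a$ with this property, $R_{n,m}$ for $n\in\Lambda$ is, up to a polynomial factor of degree $\le\mu$ with roots tending to $a$, holomorphic, and by Gonchar's Lemma (part i, applied after multiplying by $\prod(z-\zeta_{n,j})$) it converges uniformly on compacts of $U^*$; hence the limit $f$ would have a pole at $a$ of order at most $\mu<\tau$, a contradiction. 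The careful point is that the zeros of $Q_{n,m}$ approaching $a$ do so geometrically (rate $<1$) precisely when $j\le\mu(a)$, so multiplying $R_{n,m}$ by $L_n(z)=\prod_{j=1}^{\mu}(z-\zeta_{n,j})$ produces a sequence holomorphic and uniformly bounded on a fixed punctured neighborhood; since $\deg L_n=\mu$ and $L_n(z)\to (z-a)^\mu$, the product $L_n R_{n,m}$ converges to $(z-a)^\mu f$, which would then be holomorphic at $a$, forcing the order of the pole of $f$ at $a$ to be $\le\mu$.

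The main obstacle I expect is the bookkeeping around the normalization \eqref{normado} and the factor $n^{-2m}$ in \eqref{desig1}: because $Q_{n,m}$ is normalized by location of its zeros rather than by its leading coefficient, comparing $|Q_{n,m}(z)|$ for $z$ near $a$ with $|Q_{n,m}(a)|$ requires knowing that the zeros clustering near $a$ are under control, and the polynomial factors $n^{\pm 2m}$ must be shown to disappear after taking $n$-th roots (they do, since $n^{2m/n}\to 1$). A secondary subtlety is making sure that in the telescopic-sum estimate one genuinely extracts a lower bound on $\limsup|A_{n,m}|^{1/n}$ rather than just an upper bound on the tail; this is handled by noting that if the series $\sum (R_{n+1,m}-R_{n,m})$ converged geometrically fast on a neighborhood of $a$, then by Gonchar's Lemma the limit would be holomorphic there, contradicting that $a$ is a pole of $f$. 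Once that dichotomy is in place, both inequalities follow by letting the radius of the test circle shrink to $0$ and invoking \eqref{eq:Delta}.
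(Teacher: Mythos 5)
Your proposal has genuine gaps in both halves. For the bound $\Delta(a)\le |a|/R_m^*(f)$, the telescopic series of Lemma \ref{telescopica} together with \eqref{desig1} does give the key estimate $\|f-R_{n,m}\|_{\Gamma_{a,r}}\le Cq^n$, $q=(|a|+r)/\rho$, on a circle $\Gamma_{a,r}$ around $a$ avoiding $J_\varepsilon$, but you never supply a mechanism that converts this into geometric smallness of $|Q_{n,m}(a)|$, which is what $\Delta(a)$ measures by \eqref{eq:Delta}. The step you lean on — that $|Q_{n,m}(z)|$ for $z$ near $a$ is ``comparable'' to $|Q_{n,m}(a)|$ — is false in exactly the relevant regime: the theorem being proved says that zeros of $Q_{n,m}$ cluster at $a$ geometrically fast, so $|Q_{n,m}(a)|$ decays like $q^n$ while on $\Gamma_{a,r}(\varepsilon)$ one has the lower bound $C n^{-2m}$ from \eqref{desig1}; the ratio is exponentially large. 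The displayed inequality you propose (``$\le \Delta(a)^2\cdot(\text{something}\le 1)$ only if $|a|/R_m^*(f)\ge\Delta(a)$'') presupposes the conclusion, and the attempted lower bound on $\limsup|A_{n,m}|^{1/n}$ is both unnecessary (that limsup equals $1/R_m^*(f)$ by the definition \eqref{radio}) and not forced by your dichotomy: increments of size $q^n$ are perfectly compatible with $|R_{n,m}|\ge c$ on the circle, so no contradiction arises. What is missing is a principal-part argument: in the paper one compares the principal part $p(z)/(z-a)^\tau$ of $f$ with the sum of principal parts $p_n/Q_{n,a}$ of $R_{n,m}$ in $D_{a,r}$, using the Gonchar--Grigorjan estimate \cite{gon3} for the holomorphic component, clears denominators, applies the maximum principle to get $\|p\,Q_{n,a}-(z-a)^\tau p_n\|_{\overline{D}_{a,r}}\le Cq^n$, and evaluates at $z=a$ where $p(a)\neq 0$; only then does $|Q_{n,a}(a)|\le Cq^n$, hence $\Delta(a)\le q$, follow.

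The gap in the second half is more fundamental: you conflate ``only $\mu$ zeros approach $a$ geometrically'' with ``only $\mu$ zeros approach $a$ at all.'' If $\mu(a)=\mu<\tau$, the definition only says $\delta_{\mu+1}(a)=1$, i.e.\ $\limsup_n|\zeta_{n,\mu+1}-a|^{1/n}=1$; the zero $\zeta_{n,\mu+1}$ may still converge to $a$, just subgeometrically (say like $1/\log n$). Hence the subsequence you want — a fixed punctured neighborhood of $a$ containing exactly $\mu$ zeros of $Q_{n,m}$, the rest staying uniformly away — need not exist; in fact it never exists, since by Gonchar's Lemma (parts ii--iii, as in the first step of the paper's proof) every neighborhood of $a$ eventually contains at least $\tau$ zeros of $Q_{n,m}$. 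Your $L_nR_{n,m}$ normal-families argument therefore can only reprove that qualitative attraction, which is the easy preliminary statement, not the rate statement $\delta_\tau(a)<1$ that $\mu(a)\ge\tau$ asserts. The rate has to come from quantitative estimates: the paper differentiates the polynomial $p\,Q_{n,a}-(z-a)^\tau p_n$ $k$ times (Bernstein's inequality preserves the $Cq^n$ bound), evaluates at $a$, and runs an induction on $k<\tau$: in the Leibniz expansion every term except $k!\,p(a)\prod_{j=k+1}^{\mu_n}(a-\zeta_{n,j})$ carries a factor $(a-\zeta_{n,j})$ with $j\le k$, geometrically small by the induction hypothesis, which forces $\delta_{k+1}(a)<1$. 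Some argument of this quantitative kind is indispensable and is absent from your proposal.
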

\begin{proof}
Let $a$ be a pole of $f$ in $D^*_{m}(f)$ of order $\tau$ and take
$r>0$ sufficiently small so that the disk of center $a$ and radius
$r$, denoted by $D_{a,r}$, contains no other pole of $f$. It follows
from Gonchar's Lemma that the approximants $R_{n,m}$ have at least
$\tau$ poles in $D_{a,r}$ for sufficiently large $n\in\mathbb{N}$.
If this was not so, from Theorem \ref{teo:5} we have that there exists a
subsequence $\{R_{n,m}\}_{n\in\Lambda}$ converging in
$\sigma$-content to $f$ in compact subsets of $D_{a,r}$ with each
approximant having less than $\tau$ poles in $D_{a,r}$ and part ii) of
Gonchar's Lemma would imply that $f$ has less than $\tau$ poles in
$D_{a,r}$, which is absurd. As $r>0$ is arbitrarily small, we have
proved that each pole of $f$ in $D^*_{m}(f)$ attracts at least as
many zeros of $Q_{n,m}$ as its order.

Fix $\varepsilon>0$ arbitrarily small and take again $r>0$
sufficiently small so that $D_{a,r}$ contains no other pole of $f$.
Since $\sigma(J_{\varepsilon}) < \varepsilon$, we can choose $r$
such that $\Gamma_{a,r}=\{z\,:\, |z-a|=r\}\subset
D_{m^*}(f)\setminus J_\varepsilon$. Let
$\zeta_{n,1},\dots,\zeta_{n,\mu_n}$ be the zeros of $Q_{n,m}$ in
$D_{a,r}$ indexed in non-decreasing distance from $a$. That is,
$$
|a-\zeta_{n,1}|\le|a-\zeta_{n,2}|\le\dots\le |a-\zeta_{n,\mu_n}|.
$$
For all sufficiently large $n$ we know that $\zeta_{n,\tau}\in
D_{a,r}$. We will only consider such $n$'s. Consequently, we have
$\tau\le\mu_n\le m$. Set
$$
Q_{n,a}(z)=\prod_{j=1}^{\mu_n} (z-\zeta_{n,j}).
$$

For any $\rho$ with $|a|+r<\rho<R^*_{m}(f)$, it follows from
\eqref{telescopic1} and \eqref{telescopic2} that
\begin{equation}\label{limit1}
\|f-R_{n,m}\|_{\Gamma_{a,r}}<Cq^n,\qquad q=\frac{|a|+r}{\rho}<1,
\end{equation}
for sufficiently large $n$.

Let $p(z)/(z-a)^\tau$ be the principal part of the function $f$ at
the point $a$ and $p_n/Q_{n,a}$ the sum of the principal parts of
$R_{n,m}$ corresponding to its poles in $D_{a,r}$. We have $\deg
p<\tau,\, p(a)\not =0$, and $\deg p_n<\mu_n$. It is known that the
norm of the holomorphic component of a meromorphic function may be
bounded in terms of the norm of the function and the number of poles
(see Theorem 1 in \cite{gon3}). Thus, using \eqref{limit1}, we
obtain
$$
\left\|
\frac{p(z)}{(z-a)^\tau}-\frac{p_n(z)}{Q_{n,a}(z)}\right\|_{\Gamma_{a,r}}
< C q^n,
$$
for sufficiently large $n$. Therefore, getting rid of the
denominators and applying the maximum principle, we have
\begin{equation}\label{limit4}
\left\| p(z)\,Q_{n,a}(z)-(z-a)^\tau
p_n(z)\right\|_{\overline{D}_{a,r}} < C q^n,
\end{equation}
for sufficiently large $n$. All the factors in $Q_{n,m}$ that
contribute to the limit value $\Delta(a)$ are present in $Q_{n,a}$,
see \eqref{eq:Delta} and \eqref{normado}. So, making $z=a$ in
\eqref{limit4} and taking limits as $n$ tends to infinity gives the
inequality $ \Delta(a)\le q$.  As $r,\,\varepsilon,$ and $\rho$ are
arbitrary we have proved that $\Delta(a)\le |a|/R^*_{m}(f)$. To
conclude the proof we must show that $\mu(a)\ge\tau$. We
will prove it by induction.

Since $\Delta(a)<1$, we have $\delta_1(a)<1$. Let
$\delta_1(a)\le\dots\le\delta_k(a)<1$ and $k<\tau$. We differentiate
the polynomial inside the norm in \eqref{limit4} $k$ times. As this
polynomial has degree bounded by $2m-1$, its $k$th derivative
satisfies an inequality similar to \eqref{limit4} by virtue of
Bernstein's inequality (see, for instance, Section 4.4.2 in
\cite{small}). If we put $z=a$ in the corresponding inequality, we
obtain
\begin{equation}\label{limit3}
\left|\left(p(z)\prod_{j=1}^{\mu_n}(z-\zeta_{n,j})
\right)^{(k)}(a)\right|<Cq^n,
\end{equation}
for sufficiently large $n$. Now
\begin{equation}\label{limit2}
\left(p(z)\prod_{j=1}^{\mu_n}(z-\zeta_{n,j}) \right)^{(k)}(a)
=\sum_{|\alpha|=k}\frac{k!}{\alpha!}\, p^{(\beta)}(a)
\prod_{j=1}^{\mu_n}(z-\zeta_{n,j})^{(\alpha_j)}(a),
\end{equation}
where
$\alpha=(\beta,\alpha_1,\dots,\alpha_{\mu_n})\in\mathbb{Z}_+^{\mu_n+1}$,
$\alpha!=\beta!\cdot\alpha_1!\cdot {\dots} \cdot\alpha_{\mu_n}!\,$,
and $|\alpha|=\beta+\alpha_1+\dots+\alpha_{\mu_n}$. By
$\sum_{|\alpha|=k}$ we mean that the sum is taken over all the
multi-indices $\alpha$ such that $|\alpha|=k$. The total amount of
such multi-indices is bounded independently of $n$. One of them is
$(0,1,\dots,1,0,\dots,0)$ corresponding to the term
$$
k!\, p(a)\prod_{j=k+1}^{\mu_n}(z-\zeta_{n,j}).
$$
Each of the remaining terms must necessarily contain one factor
$(z-\zeta_{n,j}),\, j\in\{1,2,\dots,k\}$. Since we have assumed that
$\delta_j(a)<1$ for $j=1,\dots,k$, it follows from \eqref{limit3}
and \eqref{limit2} that
$$
\limsup_{n\to\infty} \prod_{j=k+1}^{\mu_n}|z-\zeta_{n,j}|^{1/n}<1,
$$
which in turn implies $\limsup_{n\to\infty}
|z-\zeta_{n,k+1}|^{1/n}<1$, that is, $\delta_{k+1}(a)<1$. Therefore
it holds $\mu(a)\ge\tau$ and we are done.
\end{proof}

The estimate $\Delta(a)\le |a|/R^*_{m}(f)$ can be
sharpened if one knows that a given pole attracts exactly as many
zeros of $Q_{n,m}$ as its order.

\begin{thm} \label{teo:4} Let $R_0(f) > 0$ and let $a$ be a pole of
$f$ in $D^*_{m}(f)$ of order $\tau$. Assume that $\liminf_{n \to
\infty} |a - z_{n,\tau+1}| > 0$. Then
\[ \delta_1(a) \leq \cdots \leq \delta_{\tau}(a) \leq
\left(\frac{|a|}{R^*_{m}(f)}\right)^{1/\tau}\!\!.
\]
In particular, $\delta_1(a) = \cdots = \delta_{\tau}(a) = (|a|/R_m^*(f))^{1/\tau}$ if and only if $\Delta(a)= |a|/R_m^*(f)$.
\end{thm}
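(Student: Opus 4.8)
The plan is to revisit the inequality \eqref{limit4} from the proof of Theorem \ref{convergence} and exploit the extra hypothesis $\liminf_{n\to\infty}|a-z_{n,\tau+1}|>0$, which says that exactly $\tau$ zeros of $Q_{n,m}$ converge to $a$ while the remaining $\mu_n-\tau$ stay at a positive distance. Under that hypothesis one may replace $Q_{n,a}$ in \eqref{limit4} by the product of just the $\tau$ factors $(z-\zeta_{n,j})$, $j=1,\dots,\tau$, absorbing the bounded-below factors $\prod_{j=\tau+1}^{\mu_n}(z-\zeta_{n,j})$ and $p(z)$ into the constants on both sides of the maximum-principle estimate. This yields, for sufficiently large $n$ and $q=(|a|+r)/\rho$ as before,
\[
\left\| \widetilde{p}(z)\prod_{j=1}^{\tau}(z-\zeta_{n,j}) - (z-a)^{\tau}\widetilde{p}_n(z)\right\|_{\overline{D}_{a,r}} < Cq^n,
\]
where $\widetilde{p}$ is the principal-part numerator (so $\widetilde{p}(a)\neq0$) and $\widetilde{p}_n$ has degree $<\tau$. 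Evaluating at $z=a$ kills the second term and gives $\prod_{j=1}^{\tau}|a-\zeta_{n,j}| < Cq^n/|\widetilde{p}(a)|$.

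Taking $n$-th roots, passing to the $\limsup$, and using that the $\zeta_{n,j}$ for $j\le\tau$ are ordered by distance to $a$ (so $\delta_1(a)\le\cdots\le\delta_\tau(a)$ automatically) together with the elementary inequality
\[
\prod_{j=1}^{\tau}\delta_j(a) \;=\; \limsup_{n\to\infty}\Big(\prod_{j=1}^{\tau}|a-\zeta_{n,j}|_1\Big)^{1/n}
\;\le\; q,
\]
one obtains $\delta_\tau(a)^{\tau}\le\prod_{j=1}^{\tau}\delta_j(a)\le q$, hence $\delta_\tau(a)\le q^{1/\tau}$. Letting $r\to0^+$ and $\rho\to R^*_m(f)^-$ gives $q\to|a|/R^*_m(f)$, so $\delta_1(a)\le\cdots\le\delta_\tau(a)\le(|a|/R^*_m(f))^{1/\tau}$. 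For the final sentence, recall from \eqref{eq:Delta} and the definition of $\Delta$ that $\Delta(a)=\limsup_n\prod_{j=1}^{\nu_n}|\zeta_{n,j}-a|_1^{1/n}$; since under the hypothesis only the first $\tau$ factors contribute a value $<1$ in the limit while the rest tend to $1$, we get $\Delta(a)=\prod_{j=1}^{\tau}\delta_j(a)$ (one direction is the product bound, the other follows because each $\delta_j(a)\le\delta_\tau(a)<1$ forces the full product asymptotics to be governed by these $\tau$ factors). Combining with $\Delta(a)\le|a|/R^*_m(f)$ from Theorem \ref{convergence}, the chain $(|a|/R^*_m(f)) \ge \Delta(a)=\prod_{j=1}^{\tau}\delta_j(a)$ is an equality iff each inequality $\delta_j(a)\le(|a|/R^*_m(f))^{1/\tau}$ is an equality, which happens exactly when $\delta_1(a)=\cdots=\delta_\tau(a)=(|a|/R^*_m(f))^{1/\tau}$.

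I expect the main obstacle to be the bookkeeping in the step where $Q_{n,a}$ is reduced to the $\tau$-factor product: one must verify that the discarded factors $\prod_{j=\tau+1}^{\mu_n}(z-\zeta_{n,j})$ are bounded above and below uniformly in $n$ on $\overline{D}_{a,r}$ (the upper bound is immediate from the normalization; the lower bound is exactly where $\liminf_n|a-z_{n,\tau+1}|>0$ is used, after possibly shrinking $r$), and that dividing through by them preserves an estimate of the form $Cq^n$. A secondary technical point, as in Theorem \ref{convergence}, is that \eqref{limit4} was derived assuming the $\zeta_{n,j}$, $j\le\tau$, are distinct; the general case follows by the same confluent-limit argument with only notational changes, and I would remark this rather than carry it out.
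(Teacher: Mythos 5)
There is a genuine gap, and it sits exactly at the step you flag as ``elementary''. From evaluating \eqref{limit4} (with $Q_{n,a}$ reduced to the $\tau$ close factors) at the single point $z=a$ you obtain $\prod_{j=1}^{\tau}|a-\zeta_{n,j}|<Cq^{n}$, hence $\limsup_{n}\bigl(\prod_{j=1}^{\tau}|a-\zeta_{n,j}|\bigr)^{1/n}\le q$. But your chain $\delta_{\tau}(a)^{\tau}\le\prod_{j=1}^{\tau}\delta_{j}(a)\le q$ is doubly wrong: first, since $\delta_{1}(a)\le\cdots\le\delta_{\tau}(a)$, the correct elementary inequality is $\prod_{j=1}^{\tau}\delta_{j}(a)\le\delta_{\tau}(a)^{\tau}$, i.e.\ the reverse of what you use; second, $\limsup_{n}\bigl(\prod_{j}|a-\zeta_{n,j}|\bigr)^{1/n}$ is in general only $\le\prod_{j}\delta_{j}(a)$ (limsup of a product versus product of limsups), so the product bound does not even control $\prod_{j}\delta_{j}(a)$ from above. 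What the evaluation at $z=a$ genuinely yields is $\delta_{1}(a)\le q^{1/\tau}$ (because the product is at least $|a-\zeta_{n,1}|^{\tau}$), not the desired bound on $\delta_{\tau}(a)$: the information $\prod_{j\le\tau}|a-\zeta_{n,j}|<Cq^{n}$ is perfectly consistent with, say, $|a-\zeta_{n,1}|=q^{2n}$ and $|a-\zeta_{n,\tau}|=1/n$, which would give $\delta_{\tau}(a)=1$. So evaluation at $z=a$ alone cannot prove the theorem; some information separating the individual factors is indispensable. The paper gets it from the derivative estimates: combining \eqref{limit4} with the differentiated bounds \eqref{limit3} (Bernstein's inequality) and Leibniz's rule, an induction gives $|Q_{n,a}^{(k)}(a)|\le Cq^{n}$ for $k=0,\dots,\tau-1$ (this is \eqref{derivada1}); by \eqref{derivada2} this converts into the coefficient estimate $\|(z-a)^{\tau}-Q_{n,a}\|_{\overline{D}_{a,r}}<Cq^{n}$, and evaluating at $z=\zeta_{n,\tau}$ (a zero of $Q_{n,a}$) gives $|\zeta_{n,\tau}-a|^{\tau}<Cq^{n}$ directly, i.e.\ $\delta_{\tau}(a)^{\tau}\le q$.

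A secondary weak point is the final equivalence: you assert $\Delta(a)=\prod_{j=1}^{\tau}\delta_{j}(a)$, but in general only $\Delta(a)\le\prod_{j=1}^{\tau}\delta_{j}(a)$ holds, since the individual limsups may be attained along different subsequences. The equivalence can be rescued without that identity: one direction follows from the chain $\Delta(a)\le\limsup_{n}\prod_{j\le\tau}|\zeta_{n,j}-a|_{1}^{1/n}\le\prod_{j\le\tau}\delta_{j}(a)\le\delta_{\tau}(a)^{\tau}\le|a|/R_{m}^{*}(f)$ (so equality of the two ends forces all $\delta_{j}(a)$ equal to $(|a|/R_{m}^{*}(f))^{1/\tau}$), and the other from the ordering: along a subsequence realizing $\delta_{1}(a)$ one has $\prod_{j\le\tau}|\zeta_{n,j}-a|^{1/n}\ge|\zeta_{n,1}-a|^{\tau/n}$, while the factors with $j>\tau$ contribute $1$ in the limit thanks to the hypothesis $\liminf_{n}|a-z_{n,\tau+1}|>0$; this gives $\Delta(a)\ge\delta_{1}(a)^{\tau}$. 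But as written, both halves of your argument rest on inequalities used in the wrong direction, so the proof does not go through without the paper's derivative step.
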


\begin{proof} Let us maintain the notation used in the proof of
Theorem \ref{convergence}. We may assume that
$$
Q_{n,a}(z)=\prod_{j=1}^{\tau} (z-\zeta_{n,j}).
$$
Recall that $p(a)\not =0$. So, taking $z=a$ in \eqref{limit4}, we
obtain
$
|Q_{n,a}(a)| <Cq^n,
$
for sufficiently large $n$. From this, \eqref{limit3}, and the
formula
$$
\left(p\, Q_{n,a} \right)^{(k)}(a)= p(a)\,Q_{n,a}^{(k)}(a)+
\sum_{j=0}^{k-1}
 \left(\!\!\begin{array}{c}
k\\
j\\
\end{array}\!\!
\right)p^{(k-j)}(a)\, Q_{n,a}^{(j)}(a)
$$
it readily follows by induction that
\begin{equation}\label{derivada1}
 |Q_{n,a}^{(k)}(a)| \leq
Cq^n, \qquad k=0,1,\dots,\tau-1,
\end{equation}
for sufficiently large $n$. These inequalities and the expression
\begin{equation}\label{derivada2}
Q_{n,a}(z)=(z-a)^\tau+\sum_{k=0}^{\tau-1}
\frac{Q_{n,a}^{(k)}(a)}{k!}(z-a)^{k}
\end{equation}
give $
 \left\|(z-a)^\tau-Q_{n,a}(z)\right\|_{\overline{D}_{a,r}}<
Cq^n,
$
for $n\ge N\in\mathbb{N}$. If we put here $z=\zeta_{n,\tau}$ we
obtain
$$
|\zeta_{n,\tau}-a|^\tau< Cq^n,\qquad n\ge N,
$$
which implies $\delta_\tau(a)^\tau\le q$. As $q=(|a|+r)/\rho$ and
$r>0$ and $\rho<R^*_{m}(f)$ are arbitrary, we have
\[ \delta_\tau(a) \leq \left(\frac{|a|}{R^*_{m}(f)}\right)^{1/\tau},
\]
which is all we need to show since $\delta_1(a) \leq \cdots \leq
\delta_{\tau}(a)$ is trivial.

On the other hand, according to Theorem \ref{convergence}, $\Delta(a) \leq \frac{|a|}{R^*_{m}(f)}$ is always true and the last statement readily follows.
\end{proof}


\section{Simultaneous approximation}\label{simultaneos}
Throughout this section, $\mathbf{f}=(f_1,\dots,f_d)$ denotes a
system of formal power expansions about the origin;  that is,
$$
f_k(z)=\sum_{j=0}^\infty \phi_{k,j}\,z^{j},\quad k=1,\dots,d,
$$
and
$\mathbf{m}=(m_1,\dots,m_d)\in\mathbb{Z}^d_+\setminus\{\mathbf{0}\}$
is a fixed multi-index. We are concerned with the simultaneous
approximation of $\textbf{f}$ by sequences of vector rational
functions defined according to Definition \ref{defsimultaneos}
taking account of (\ref{incomplete}). That is, for each
$n\in\mathbb{N}, n \geq |{\bf m}|$, let
$\left(R_{n,\mathbf{m},1},\dots,R_{n,\mathbf{m},d}\right)$ be a
Hermite-Pad\'e approximation of type $(n,\mathbf{m})$ corresponding
to $\mathbf{f}$. As we mentioned earlier, $R_{n,\mathbf{m},k}$ is an
incomplete Pad\'e approximant of type $(n,|\mathbf{m}|,m_k)$ with
respect to $f_k,\,k=1,\dots,d$. In the sequel, we consider $\Delta$
and $\mu$ defined as in Section \ref{incompletos} taking
$\mathcal{P}_{n,m}$ to be the collection of zeros of the common
denominator $Q_{n,\mathbf{m}}$.

The number $R_{|\mathbf{m}|}^*(f_k)$ determines the radius of the
largest disk, denoted by $D_{|\mathbf{m}|}^*(f_k)$, centered at the
origin in compact subsets of which we have
$\sigma$-$\lim_{n\to\infty} R_{n,\mathbf{m},k} = f_k$. Theorem
\ref{teo:5} gives $R_{m_k}(f_k)\le R_{|\mathbf{m}|}^*(f_k)\le
R_{|\mathbf{m}|}(f_k)$ and a formula for finding
$R_{|\mathbf{m}|}^*(f_k)$. The following result is a rather
straightforward consequence of Theorem \ref{convergence}.

\begin{cor}\label{convergence2}
Suppose that $R_0(\mathbf{f}) >0$. For each $k=1,\dots,d,$ if a is a pole
of $f_k$ in $D^*_{|\mathbf{m}|}(f_k)$ of order $\tau$, then $
\Delta(a)\le |a|/R^*_{|\mathbf{m}|}(f_k)$ and $\mu(a) \geq \tau$.
\end{cor}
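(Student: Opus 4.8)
The plan is to derive Corollary~\ref{convergence2} as an immediate specialization of Theorem~\ref{convergence}. First I would recall, as already noted in the text, that each coordinate function $R_{n,\mathbf{m},k}$ is an incomplete Pad\'e approximant of type $(n,|\mathbf{m}|,m_k)$ with respect to $f_k$, with common denominator $Q_{n,\mathbf{m}}$; this follows directly from Definition~\ref{defsimultaneos} together with \eqref{incomplete}. Thus for the fixed index $k$ we are exactly in the situation covered by Section~\ref{incompletos} with $f=f_k$, $m=|\mathbf{m}|$, and $m^*=m_k$.

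Next I would observe that the hypothesis $R_0(\mathbf{f})>0$ entails $R_0(f_k)>0$ for every $k$, since $R_0(\mathbf{f})$ is by definition the radius of the largest disk in which \emph{all} the $f_j$ are analytic, and in particular $R_0(f_k)\ge R_0(\mathbf{f})>0$. Hence the standing assumption $R_0(f)>0$ of Theorem~\ref{convergence} is met with $f=f_k$.

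With these identifications in place, the proof is simply an invocation: if $a$ is a pole of $f_k$ in $D^*_{|\mathbf{m}|}(f_k)$ of order $\tau$, then Theorem~\ref{convergence} applied to $f_k$ (in the role of $f$), with $m=|\mathbf{m}|$ and $m^*=m_k$, yields $\Delta(a)\le |a|/R^*_{|\mathbf{m}|}(f_k)$ and $\mu(a)\ge\tau$. The only subtlety worth making explicit is that the indicators $\Delta(a)$ and $\mu(a)$ here are computed from the zeros of the \emph{common} denominator $Q_{n,\mathbf{m}}$, and this is legitimate precisely because $Q_{n,\mathbf{m}}$ plays the role of $Q_{n,m}$ in the incomplete Pad\'e data d.3)--d.4) for $f_k$ (after cancellation of common factors, as in \eqref{incomplete}); this matches the convention fixed at the end of the introductory paragraph of Section~\ref{simultaneos}.

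The main (and essentially only) obstacle is purely bookkeeping: one must be careful that the normalization of $Q_{n,\mathbf{m}}$ as in \eqref{normado}, the possible common zero at the origin recorded by $\lambda_n$, and the degree bounds d.3) are all inherited correctly when passing from the vector problem to the $k$th scalar incomplete problem. Once this translation is checked, nothing further is needed; there is no new estimate to prove.
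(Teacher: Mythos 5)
Your route is the paper's own: apply Theorem \ref{convergence} to $R_{n,\mathbf{m},k}$ viewed as an incomplete Pad\'e approximant of type $(n,|\mathbf{m}|,m_k)$ of $f_k$ (and your reduction of the hypotheses, $R_0(\mathbf{f})>0\Rightarrow R_0(f_k)>0$, is fine). But the one point you wave through is precisely the point that carries the content of the corollary. Theorem \ref{convergence} is a statement about the indicators $\Delta,\mu$ built from the zeros of the \emph{reduced} denominator of $R_{n,\mathbf{m},k}$, that is, the polynomial obtained after cancelling every factor common to the denominator and the single numerator $P_{n,\mathbf{m},k}$. In \eqref{incomplete}, however, $Q_{n,\mathbf{m}}$ is only required to have no factor in common with \emph{all} the $P_{n,\mathbf{m},j}$ simultaneously; it may still share a factor with $P_{n,\mathbf{m},k}$ alone. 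So your assertion that ``$Q_{n,\mathbf{m}}$ plays the role of $Q_{n,m}$ in d.3)--d.4) for $f_k$ after cancellation of common factors, as in \eqref{incomplete}'' is not correct as stated: the denominator $Q_{n,\mathbf{m},k}$ of $R_{n,\mathbf{m},k}$, regarded as an incomplete Pad\'e approximant of $f_k$, can be a proper divisor of $Q_{n,\mathbf{m}}$, while the corollary's indicators are by convention computed from the zeros of $Q_{n,\mathbf{m}}$, not of $Q_{n,\mathbf{m},k}$. As it stands, invoking Theorem \ref{convergence} only gives the conclusion for the indicators attached to $Q_{n,\mathbf{m},k}$.

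What is missing is the short comparison step that the paper supplies. Write $Q_{n,\mathbf{m}}=Q_{n,\mathbf{m},k}\,S_n$, where $S_n$ is a product of at most $|\mathbf{m}|$ factors normalized as in \eqref{normado}, and let $\Delta_k,\mu_k$ denote the indicators computed from $Q_{n,\mathbf{m},k}$. Theorem \ref{convergence} gives $\Delta_k(a)\le |a|/R^*_{|\mathbf{m}|}(f_k)$ and $\mu_k(a)\ge\tau$. Since the zeros of $Q_{n,\mathbf{m},k}$ form a subset of those of $Q_{n,\mathbf{m}}$, for each $j$ the $j$-th closest zero to $a$ can only move closer, whence $\mu(a)\ge\mu_k(a)\ge\tau$; and since the normalization gives $|S_n(a)|\le (1+|a|)^{|\mathbf{m}|}$ uniformly in $n$, formula \eqref{eq:Delta} yields $\Delta(a)=\limsup_{n\to\infty}|Q_{n,\mathbf{m}}(a)|^{1/n}\le \limsup_{n\to\infty}|Q_{n,\mathbf{m},k}(a)|^{1/n}=\Delta_k(a)\le |a|/R^*_{|\mathbf{m}|}(f_k)$. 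With this comparison inserted your argument is complete; without it, the corollary does not follow verbatim from Theorem \ref{convergence} under the convention of Section \ref{simultaneos}.
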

\begin{proof}
Fix $k=1,\dots,d$. Denote the denominator of ${R}_{n,\mathbf{m},k}$,
considered as an incomplete Pad\'e approximant of $f_k$,  by
${Q}_{n,{\bf m},k}$. This polynomial is either $Q_{n,{\bf m}}$ or a divisor
of it, since there may be some additional cancelations of common factors with
the numerator of ${R}_{n,\mathbf{m},k}$. Let ${\Delta}_k$ and
${\mu}_k$ stand for the indicators $\Delta$ and $\mu$, respectively,
when using ${Q}_{n,{\bf m},k}$ instead of $Q_{n,{\bf m}}$. Let $a$ be a pole
of $f_k$ in $D^*_{|\mathbf{m}|}(f_k)$ of order $\tau$. Then, Theorem
\ref{convergence} gives
$$
\Delta_k(a)\le |a|/R^*_{|\mathbf{m}|}(f_k),\qquad \mu_k (a) \geq
\tau.
$$
It is clear that $\mu(a)\ge\mu_k(a)$ whereas
$$
\Delta
(a)=\limsup_{n\to\infty}\left|Q_{n,\mathbf{m}}(a)\right|^{1/n} \le
\limsup_{n\to\infty}\left|{Q}_{n,m}(a)\right|^{1/n}= \Delta_k (a),
$$
which proves the result.
\end{proof}

To each pole $a$ of ${\bf f}$ in a system of domains
$\mathbf{D}=(D_1,\dots,D_d)$ (see Definition \ref{defpolo}) we
associate an index $k(a)\in\{1,\dots,d\}$ as follows. The index
$k(a)$ verifies that $a\in D_{k(a)}$ and $a$ is a pole of $f_{k(a)}$
of the same order as it is as a pole of ${\bf f}$ in $\mathbf{D}$.
If there are several indices $k$ satisfying that condition we choose
one among those with greatest $R^*_{|\mathbf{m}|}(f_k)$.

Given a system ${\bf f} = (f_1,\ldots,f_d)$ and
 a multi-index
$\mathbf{m}\in \mathbb{Z}^d_+ \setminus \{\bf 0\}$, put
$$
\mathbf{D}_{\mathbf{m}}^*(\mathbf{f})=\left(D^*_{|\mathbf{m}|}(f_1),\dots,D^*_{|\mathbf{m}|}(f_d)
\right),\qquad D^*_{\mathbf{m}}(\mathbf{f})=\bigcap_{k=1}^d
D^*_{|\mathbf{m}|}(f_k),
$$
and let $R^*_{\mathbf{m}}(\mathbf{f})$ stand for the radius of
$D^*_{\mathbf{m}}(\mathbf{f})$. By
$\mathcal{Q}_\mathbf{m}(\mathbf{f})$ we denote the polynomial whose
zeros are the poles of ${\bf f}$ in
$\mathbf{D}_{\mathbf{m}}^*(\mathbf{f})$ counting multiplicities and
normalized as $Q_{n,{\bf m}}$ in \eqref{normado}. This set of poles is denoted by
 $\mathcal{P}_{\mathbf{m}}(\mathbf{f})$. For $k=1,\dots,d$, set
 $\mathcal{P}_{\mathbf{m},k}(\mathbf{f})=
 \mathcal{P}_{\mathbf{m}}(\mathbf{f})\cap D^*_{|\mathbf{m}|}(f_k)$.

\begin{lem}\label{lema2}
The following assertions hold:
\begin{itemize}
\item[a)] If $R_0(\mathbf{f}) >0$ then ${\bf f}$ has at most $|\mathbf{m}|$
poles in $\mathbf{D}_{\mathbf{m}}^*(\mathbf{f})$.
\item[b)] If $R^*_{\mathbf{m}}(\mathbf{f})>0$ then $R_0(\mathbf{f}) >0$.
\item[c)] $R^*_{\mathbf{m}}(\mathbf{f})\le
R_{|\mathbf{m}|}(\mathbf{f})$.
\item[d)] Suppose that ${\bf f}$
is polewise independent with respect to ${\bf m}$ in $D_{|{\bf
m}|}({\bf f}), $ then $R^*_{\mathbf{m}}(\mathbf{f})=
R_{|\mathbf{m}|}(\mathbf{f})$.
\end{itemize}
\end{lem}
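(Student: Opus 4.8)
The plan is to prove the four assertions of Lemma~\ref{lema2} essentially by reducing everything to what we already know about a single incomplete Pad\'e approximant and then using the semi-additivity and monotonicity of $\sigma$-content together with Gonchar's Lemma.

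For part a), I would argue as follows. Fix $k$ and consider the row sequence $\{R_{n,\mathbf{m},k}\}_{n\ge|\mathbf{m}|}$, which by the remark after Definition~\ref{defincompletos} is a sequence of incomplete Pad\'e approximants of type $(n,|\mathbf{m}|,m_k)$ for $f_k$. By Theorem~\ref{teo:5}, $\sigma$-$\lim_{n\to\infty}R_{n,\mathbf{m},k}=f_k$ in $D^*_{|\mathbf{m}|}(f_k)$, each $R_{n,\mathbf{m},k}$ has at most $|\mathbf{m}|$ poles there (being a ratio of polynomials with denominator of degree $\le|\mathbf{m}|$), so part ii) of Gonchar's Lemma tells us $f_k$ is meromorphic with at most $|\mathbf{m}|$ poles in $D^*_{|\mathbf{m}|}(f_k)$. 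Now count poles of $\mathbf{f}$ in the system $\mathbf{D}^*_{\mathbf{m}}(\mathbf{f})$ (Definition~\ref{defpolo}): each such pole $a$, with order $\tau_a$, contributes $\tau_a$ and is a pole of $f_{k(a)}$ in $D^*_{|\mathbf{m}|}(f_{k(a)})$ of order $\ge\tau_a$ (actually exactly $\tau_a$ by the choice of $k(a)$). I must check that distinct poles of $\mathbf{f}$ with the same associated index $k$ correspond to distinct poles of $f_k$ — this is immediate from Definition~\ref{defpolo}. Hence $\sum_a\tau_a\le\sum_{k=1}^d(\text{number of poles of }f_k\text{ in }D^*_{|\mathbf{m}|}(f_k))$ would overcount; the cleaner route is: for a single $k$, the poles of $\mathbf{f}$ in $\mathbf{D}^*_{\mathbf{m}}(\mathbf{f})$ with $k(a)=k$ are among the poles of $f_k$, and one can instead argue directly that the common denominators $Q_{n,\mathbf{m}}$ (degree $\le|\mathbf{m}|$) attract, near each pole $a$ of $\mathbf{f}$, at least $\tau_a$ zeros — this is exactly Corollary~\ref{convergence2} applied with $k=k(a)$, since $\mu(a)\ge\mu_{k(a)}(a)\ge\tau_a$ — and $\sum_{a\in\mathbb{C}}\mu(a)\le|\mathbf{m}|$ by the discussion after \eqref{eq:Delta}. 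That is the argument I would write down.

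Parts b) and c) are short. For b): if $R^*_{\mathbf{m}}(\mathbf{f})>0$ then $R^*_{|\mathbf{m}|}(f_k)>0$ for every $k$, so by Theorem~\ref{teo:5} (applied to each $f_k$ as an incomplete Pad\'e situation) $R_0(f_k)>0$ for each $k$, whence $R_0(\mathbf{f})=\min_k R_0(f_k)>0$. For c): by a) (assuming $R_0(\mathbf{f})>0$, which holds when $R^*_{\mathbf{m}}(\mathbf{f})>0$ by b), and trivially if $R^*_{\mathbf{m}}(\mathbf{f})=0$ there is nothing to prove), $\mathbf{f}$ has at most $|\mathbf{m}|$ poles in the system $\mathbf{D}^*_{\mathbf{m}}(\mathbf{f})$; but $D^*_{\mathbf{m}}(\mathbf{f})=\bigcap_k D^*_{|\mathbf{m}|}(f_k)\subset\bigcap_k D_{|\mathbf{m}|}(f_k)$ is a disk in which $\mathbf{f}$ has at most $|\mathbf{m}|$ poles, and $R_{|\mathbf{m}|}(\mathbf{f})$ is by definition the radius of the largest such disk, so $R^*_{\mathbf{m}}(\mathbf{f})\le R_{|\mathbf{m}|}(\mathbf{f})$.

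Part d) is the one real point. Under polewise independence of $\mathbf{f}$ with respect to $\mathbf{m}$ in $D_{|\mathbf{m}|}(\mathbf{f})$, I want the reverse inequality $R^*_{\mathbf{m}}(\mathbf{f})\ge R_{|\mathbf{m}|}(\mathbf{f})$, i.e. the $\sigma$-limit actually persists all the way out to $D_{|\mathbf{m}|}(\mathbf{f})$ for every component. The plan is to suppose for contradiction that $R^*_{|\mathbf{m}|}(f_{k_0})<R_{|\mathbf{m}|}(\mathbf{f})$ for some $k_0$. Using the telescoping formula of Lemma~\ref{telescopica} componentwise and Gonchar's Lemma as in Theorem~\ref{teo:5}, the sequence $\{R_{n,\mathbf{m},k_0}\}$ then genuinely diverges (off a $\sigma$-null set) on $\{R^*_{|\mathbf{m}|}(f_{k_0})<|z|<R_{|\mathbf{m}|}(\mathbf{f})\}$, which must be reconciled with the fact that on that annulus $f_{k_0}$ has only finitely many poles. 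The mechanism for the contradiction is the interpolation conditions b.2): passing to a subsequence where the numbers of poles of $Q_{n,\mathbf{m}}$ stabilize and the zeros converge, say to $z_1,\dots,z_s$ with $s\le|\mathbf{m}|$, one builds from the limiting common denominator a fixed polynomial $Q$ of degree $\le|\mathbf{m}|$ and, via the uniform convergence on curves together with \eqref{eq:34}--\eqref{eq:36}-type Laurent/Taylor comparisons, polynomials $p_j$ with $\deg p_j\le m_j-1$ such that $\sum_j p_j f_j$ extends holomorphically across the would-be spurious limit poles lying in $D_{|\mathbf{m}|}(\mathbf{f})$; polewise independence forbids such a relation, forcing all limit poles of $Q_{n,\mathbf{m}}$ in $D_{|\mathbf{m}|}(\mathbf{f})$ to be actual poles of $\mathbf{f}$, hence $D^*_{|\mathbf{m}|}(f_k)\supseteq D_{|\mathbf{m}|}(\mathbf{f})$ and we are done. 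The main obstacle I anticipate is precisely the bookkeeping in this last paragraph: matching the limiting denominator and the limiting principal parts of the several components $R_{n,\mathbf{m},k}$ against a single polewise-independence relation, while keeping the degree constraints $\deg p_j\le m_j-1$ — this is where the row-sequence argument of Gonchar in \cite{gon2}, adapted to the vector setting, does the real work, and it is the step I would write out in full detail.
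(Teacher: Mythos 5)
Your parts a), b) and c) are in substance the paper's own arguments: a) is exactly the paper's proof (each pole of $\mathbf{f}$ in $\mathbf{D}_{\mathbf{m}}^*(\mathbf{f})$ attracts at least as many zeros of $Q_{n,\mathbf{m}}$ as its order by Corollary \ref{convergence2}, and $\sum_a \mu(a)\le|\mathbf{m}|$ since $\deg Q_{n,\mathbf{m}}\le|\mathbf{m}|$); b) is Theorem \ref{teo:5} applied componentwise with $m=|\mathbf{m}|$, $m^*=m_k$; and c) is the same pole count, phrased via the maximality in the definition of $R_{|\mathbf{m}|}(\mathbf{f})$ rather than by contradiction. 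No objection there.

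Part d) is where your proposal has a genuine gap. The paper's proof of d) is short and indirect: it invokes the Graves-Morris/Saff Theorem (stated in the introduction and cited from \cite{GS1}), whose inequality \eqref{inequality3} gives in particular $\sigma$-$\lim_n R_{n,\mathbf{m},k}=f_k$ in $D_{|\mathbf{m}|}(\mathbf{f})$ under polewise independence; since Theorem \ref{teo:5} identifies $D^*_{|\mathbf{m}|}(f_k)$ as the \emph{largest} disk of $\sigma$-convergence, this forces $D_{|\mathbf{m}|}(\mathbf{f})\subset D^*_{|\mathbf{m}|}(f_k)$ for every $k$, hence $R_{|\mathbf{m}|}(\mathbf{f})\le R^*_{\mathbf{m}}(\mathbf{f})$, and equality follows from c). (The paper is explicit, after Theorem \ref{saff2}, that it uses the Graves-Morris/Saff Theorem to establish this fact.) You instead propose a self-contained inverse-type argument: assume $R^*_{|\mathbf{m}|}(f_{k_0})<R_{|\mathbf{m}|}(\mathbf{f})$, pass to a subsequence with stabilizing poles, and manufacture polynomials $p_j$ with $\deg p_j\le m_j-1$ so that $\sum_j p_j f_j$ is holomorphic across the spurious limit poles, contradicting polewise independence. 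That is a legitimate strategy in principle, but the decisive construction — extracting the relation $\sum_j p_j f_j\in\mathcal{H}$ with the degree bounds $\deg p_j\le m_j-1$ from the interpolation conditions b.2) and the limiting denominator — is precisely the substance of the Graves-Morris/Saff proof itself, and you leave it unexecuted ("the step I would write out in full detail"). Note also that divergence of $\{R_{n,\mathbf{m},k_0}\}$ on the annulus together with meromorphy of $f_{k_0}$ there is not by itself contradictory: the paper's example in Section \ref{ejemplo1} (with $\mathbf{f}=(f,g)$, $R_2^*(f)=1<R_2(f)=\infty$) shows exactly this behaviour when polewise independence fails, so the whole burden of the proof rests on the step you have only sketched. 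As written, d) is not proved; either carry out that vector Gonchar-type argument in full, or take the paper's shortcut and quote the Graves-Morris/Saff Theorem.
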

\begin{proof}
Suppose that $R_0(\mathbf{f}) >0$ and ${\bf f}$ has more than
$|\mathbf{m}|$ poles in $\mathbf{D}_{\mathbf{m}}^*(\mathbf{f})$.
Due to Corollary \ref{convergence2}, each of those poles attracts as many zeros of $Q_{n,\mathbf{m}}$ as
its order. Then, $\deg
Q_{n,\mathbf{m}}>|\mathbf{m}|$, which is absurd. Therefore, a) takes place.

If $R^*_{\mathbf{m}}(\mathbf{f})>0$ then, for each $k=1,\dots,d$,
$R^*_{|\mathbf{m}|}(f_k)>0$. By virtue
of Theorem \ref{teo:5}, taking $m=|\mathbf{m}|$ and $m^*=m_k$, this implies $R_{0}(f_k)>0, k=1,\ldots,d$. This
proves assertion b). As for c), if $R_0(\mathbf{f}) =0$ then the
result is trivial due to part b). Suppose that $R_0(\mathbf{f}) >0$
and $R_{|\mathbf{m}|}(\mathbf{f})<R^*_{\mathbf{m}}(\mathbf{f})$.
Then $\mathbf{f}$ has at least $|\mathbf{m}|+1$ poles in
$D^*_{\mathbf{m}}(\mathbf{f})$, which contradicts part a).

Regarding d) we can assume that $R_0(\mathbf{f}) >0$ since the case
$R_0(\mathbf{f}) =0$ is trivial. The Graves-Morris/Saff Theorem says
in particular that, for each $k=1,\dots,d$, we have
$\sigma$-$\lim_{n\to\infty} R_{n,{\bf m},k} = f_k$ in
$D_{|\mathbf{m}|}(\mathbf{f})$. From the definition of
$D^*_{|\mathbf{m}|}(f_k)$ it follows that
$D_{|\mathbf{m}|}(\mathbf{f})\subset D^*_{|\mathbf{m}|}(f_k),\,
k=1,\dots,d$. Hence $D_{|{\bf m}|}({\bf f}) \subset D_{{\bf m}}({\bf f})$ and by c) the equality follows. With this, we conclude the proof.
\end{proof}

We will use the following concept in the next theorem.

\begin{defi}\label{defregular}
We say that a compact set $K\subset\mathbb{C}$ is $\sigma$-regular
if for each $z_0\in K$ and for each $\delta>0$, it holds
$$
\sigma\{z\in K\,:\, |z-z_0|<\delta \}>0.
$$
\end{defi}

We are ready to prove our main result.

\begin{thm}\label{saff2}
Let $\mathcal{P}_{\mathbf{m}}(\mathbf{f})=\{a_1,\dots,a_\nu\}$.
Suppose that $R_0(\mathbf{f}) >0$ and that $\mathbf{f}$ has exactly
$|\mathbf{m}|$ poles in $\mathbf{D}_{\mathbf{m}}^*(\mathbf{f})$.
Then,
\begin{equation}\label{inequality}
 \limsup_{n \to \infty}
\|f_k - R_{n,{\bf m},k}\|_{K}^{1/n} \leq \frac{\|z\|_
{K}}{R^*_{|{\bf m}|}(f_k)}, \quad k=1,\ldots,d,
\end{equation}
where ${K}$ is any compact subset of $  D^*_{|{\bf m}|}(f_k)
\setminus {\mathcal{P}}_{{\bf m},k}({\bf f})$.  Also, we have
\begin{equation}\label{inequality2}
\limsup_{n \to \infty} \|\mathcal{Q}_{\mathbf{m}}(\mathbf{f}) -
Q_{n,{\bf m}}\|^{1/n}\le \max_{i=1,\dots,\nu}\left\{\frac{|a_i|}{
R^*_{|{\bf m}|}(f_{k(a_i)})}\right\}.
\end{equation}
If, additionally, ${K}$ is $\sigma$-regular, then
we have equality in \eqref{inequality}.
\end{thm}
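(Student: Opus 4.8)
The strategy is to reduce everything to the single-function theory developed in Section \ref{incompletos}, applied to each $f_k$ viewed as an incomplete Pad\'e approximant of type $(n,|\mathbf{m}|,m_k)$. First I would establish the convergence estimate \eqref{inequality}. Fix $k$ and a compact set $K\subset D^*_{|\mathbf{m}|}(f_k)\setminus\mathcal{P}_{\mathbf{m},k}(\mathbf{f})$. By Lemma \ref{lema2}a), $\mathbf{f}$ has at most $|\mathbf{m}|$ poles in $\mathbf{D}^*_{\mathbf{m}}(\mathbf{f})$, and by hypothesis it has exactly $|\mathbf{m}|$; combined with Corollary \ref{convergence2} (each pole $a$ of $f_k$ in $D^*_{|\mathbf{m}|}(f_k)$ satisfies $\mu(a)\ge\tau$) and $\sum_a\mu(a)\le|\mathbf{m}|$, this forces $\mu(a)=\tau$ for every such pole, i.e. each pole of $f_k$ in $D^*_{|\mathbf{m}|}(f_k)$ attracts \emph{exactly} as many zeros of $Q_{n,\mathbf{m}}$ (hence of $Q_{n,\mathbf{m},k}$) as its order, and no spurious poles accumulate in $D^*_{|\mathbf{m}|}(f_k)$. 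Then I would run the telescoping argument of Theorem \ref{teo:5}: writing $f_k-R_{n,\mathbf{m},k}$ as the tail of the telescopic series from Lemma \ref{telescopica}, and using the bounds \eqref{desig1}, \eqref{desig2} together with the fact that the finite zeros of $Q_{n,\mathbf{m},k}$ outside a fixed neighborhood of $\mathcal{P}_{\mathbf{m},k}(\mathbf{f})$ stay away from $K$ (because $\mu(a)=\tau$ controls \emph{all} the limit points of the zeros in $D^*_{|\mathbf{m}|}(f_k)$), one gets the factor $\|z\|_K/R^*_{|\mathbf{m}|}(f_k)$ after taking $n$th roots and letting the auxiliary radius tend to $R^*_{|\mathbf{m}|}(f_k)$. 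This yields \eqref{inequality}.

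**The denominator estimate.** For \eqref{inequality2}, let $a_i$ be a pole of $\mathbf{f}$ in $\mathbf{D}^*_{\mathbf{m}}(\mathbf{f})$ of order $\tau_i$, and let $k(a_i)$ be the associated index from the paragraph preceding the theorem, so $a_i$ is a pole of $f_{k(a_i)}$ of order $\tau_i$ with largest $R^*_{|\mathbf{m}|}(f_{k(a_i)})$. By the previous paragraph applied to $f_{k(a_i)}$, exactly $\tau_i$ zeros of $Q_{n,\mathbf{m}}$ tend to $a_i$ and the hypothesis $\liminf_n|a_i-\zeta_{n,\tau_i+1}|>0$ is met (no extra zeros accumulate at $a_i$); hence Theorem \ref{teo:4} gives $\delta_j(a_i)\le(|a_i|/R^*_{|\mathbf{m}|}(f_{k(a_i)}))^{1/\tau_i}$ for $j=1,\dots,\tau_i$. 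Since $\sum_i\tau_i=|\mathbf{m}|$, \emph{all} finite zeros of $Q_{n,\mathbf{m}}$ other than those converging to the $a_i$'s must have modulus tending to be $\ge 1$ in the $|\cdot|_1$ sense, i.e. they do not contribute. Writing $Q_{n,\mathbf{m}}-\mathcal{Q}_{\mathbf{m}}(\mathbf{f})$ in terms of symmetric functions of the zeros and using that each zero converging to $a_i$ does so at the geometric rate $\delta_j(a_i)$, a standard computation (the $\sigma$-content zero exceptional set plays no role since we are bounding coefficients, which are continuous in the zeros) yields the coefficient-norm bound $\max_i\{|a_i|/R^*_{|\mathbf{m}|}(f_{k(a_i)})\}$, which is \eqref{inequality2}.

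**The lower bound under $\sigma$-regularity.** For the last sentence, suppose $K$ is $\sigma$-regular and assume for contradiction that $\limsup_n\|f_k-R_{n,\mathbf{m},k}\|_K^{1/n}<\|z\|_K/R^*_{|\mathbf{m}|}(f_k)$. Then along the whole sequence, $R_{n,\mathbf{m},k}\to f_k$ uniformly on $K$ at a rate strictly faster than $(\max_{z\in K}|z|/R^*_{|\mathbf{m}|}(f_k))^n$. Using the telescopic formula once more, this forces $\|R_{n+1,\mathbf{m},k}-R_{n,\mathbf{m},k}\|_{K(\varepsilon)}$ to decay faster than the ``correct'' rate, and by the divergence half of Theorem \ref{teo:5} (the series $\sum A_{n,m}z^{n+1-\cdots}q_{n,m-m^*}/(Q_{n,m}Q_{n+1,m})$ diverges at each point with $|z|>R^*_{|\mathbf{m}|}(f_k)$ off a set of $\sigma$-content zero), one deduces that $|z|\le R^*_{|\mathbf{m}|}(f_k)$ must hold throughout $K$ except on a $\sigma$-content-zero set — but $\sigma$-regularity of $K$ says every relatively open piece of $K$ has positive $\sigma$-content, so this would force $K\subset\overline{D^*_{|\mathbf{m}|}(f_k)}$ with $K$ meeting the boundary only in a null set, contradicting that $K$ has points where $|z|$ attains its maximum on $K$ with that maximum forced to exceed... . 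More precisely, one shows that on the set where the convergence is ``too fast'' the radius $R^*_{|\mathbf{m}|}(f_k)$ could be enlarged, contradicting its maximality as established in Theorem \ref{teo:5}. \textbf{The main obstacle} I anticipate is precisely this last step: making the heuristic ``too-fast convergence on a $\sigma$-regular compact set forces a larger disk of convergence'' rigorous, since one must carefully combine the pointwise-divergence statement of Theorem \ref{teo:5} (which holds off a $\sigma$-content-zero set) with $\sigma$-regularity to get a genuine contradiction, rather than merely an almost-everywhere statement; the semi-additivity and monotonicity of $\sigma$ and the definition of $\sigma$-regularity are the tools that close this gap.
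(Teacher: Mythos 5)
Your first paragraph follows the paper's route for \eqref{inequality} (counting poles via Corollary \ref{convergence2} to force $\mu(a)=\tau$ and $\deg Q_{n,\mathbf{m}}=|\mathbf{m}|$, hence $K=K(\varepsilon)$ eventually, then the telescoping estimates \eqref{telescopic1}--\eqref{telescopic2}), and that part is fine. The genuine gap is in your derivation of \eqref{inequality2}. You propose to deduce it from the conclusion of Theorem \ref{teo:4}, i.e.\ from the individual-zero rates $\delta_j(a_i)\le\bigl(|a_i|/R^*_{|\mathbf{m}|}(f_{k(a_i)})\bigr)^{1/\tau_i}$, by expressing the coefficients of $Q_{n,\mathbf{m}}$ as symmetric functions of its zeros. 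Coefficients are indeed continuous in the zeros, but bounding them through the individual distances $|\zeta_{n,j}-a_i|$ only yields the rate $\max_i\bigl(|a_i|/R^*_{|\mathbf{m}|}(f_{k(a_i)})\bigr)^{1/\tau_i}$, which is strictly weaker than \eqref{inequality2} whenever some $\tau_i>1$: for a double pole the two nearby zeros may individually converge only like $q^{n/2}$, and $|\zeta_{n,1}-a|+|\zeta_{n,2}-a|\approx 2q^{n/2}$ gives no better, while the true coefficient rate $q^n$ comes from a cancellation (e.g.\ $\zeta_{n,1,2}=a\pm q^{n/2}$ has sum converging at rate $q$). The paper captures exactly this cancellation by \emph{not} passing through the zeros: it reuses the intermediate derivative bounds \eqref{derivada1} together with \eqref{derivada2} to get $\limsup_n\|(z-a_i)^{\tau_i}-Q_{n,a_i}\|^{1/n}\le |a_i|/R^*_{|\mathbf{m}|}(f_{k(a_i)})$ (formula \eqref{limit6}), and then a telescoping product decomposition of $Q_{|\mathbf{m}|}-Q_{n,|\mathbf{m}|}$ into factors $(z-a_i)^{\tau_i}-Q_{n,a_i}$. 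Your argument needs those derivative estimates, not merely the $\delta_j$'s.

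The second gap is the equality statement for $\sigma$-regular $K$, which you yourself leave open. The heuristic ``too-fast convergence on $K$ would enlarge the disk'' does not close: $K$ sits inside $D^*_{|\mathbf{m}|}(f_k)$, so the divergence statement of Theorem \ref{teo:5} (valid only for $|z|>R^*_{|\mathbf{m}|}(f_k)$, off a null set) says nothing about points of $K$, and the maximality of $D^*_{|\mathbf{m}|}(f_k)$ as a disk of $\sigma$-convergence does not forbid anomalously fast convergence on one particular compact set. The missing mechanism is the definition \eqref{radio} of $R^*$ through the coefficients $A_{n,m}$: from Lemma \ref{telescopica} one can solve for $|A_{n,\mathbf{m},k}|$ by evaluating the telescopic identity at a single point $z_j\in K\setminus J'_0$, where $J'_0=\cap_{\varepsilon>0}J'_\varepsilon$ has $\sigma$-content zero and $\sigma$-regularity supplies such $z_j\to z_0$ with $|z_0|=\|z\|_K$; bounding $|R_{n+1,\mathbf{m},k}(z_j)-R_{n,\mathbf{m},k}(z_j)|$ by $\|f_k-R_{n+1,\mathbf{m},k}\|_K+\|f_k-R_{n,\mathbf{m},k}\|_K$ and using \eqref{desig1}--\eqref{desig2} gives $1/R^*_{|\mathbf{m}|}(f_k)\le |z_j|^{-1}\limsup_n\|f_k-R_{n,\mathbf{m},k}\|_K^{1/n}$, and letting $j\to\infty$ yields the reverse of \eqref{inequality} directly, with no contradiction argument needed. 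Without this evaluation-at-a-point step your third paragraph is not a proof.
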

\begin{proof}
Let $a$ be an arbitrary pole of ${\bf f}$ in
$\mathbf{D}_{\mathbf{m}}^*(\mathbf{f})$ and let $\tau$ be its order.
Then, $a$ is a pole of $f_{k(a)}$ in $ D^*_{|{\bf m}|}(f_{k(a)})$ of
order $\tau$. According to Corollary \ref{convergence2}, we have
$\mu(a)\ge\tau$. As this is true for any other pole of $\mathbf{f}$
in $\mathbf{D}_{\mathbf{m}}^*(\mathbf{f})$ and $\deg
Q_{n,\mathbf{m}}\le |\mathbf{m}|$, we have $\deg Q_{n,\mathbf{m}}=
|\mathbf{m}|$ for sufficiently large $n$, $\mu(a)=\tau$, and
\begin{equation}\label{tiendeacero}
\lim_{n \to \infty} \|\mathcal{Q}_{\mathbf{m}}(\mathbf{f}) -
Q_{n,{\bf m}}\|=0.
\end{equation}

Take $r>0$ sufficiently small so that $D_{a,r}$ contains no other
pole of $\mathbf{f}$. Let $\zeta_{n,1},\dots,\zeta_{n,\mu_n}$ be the
zeros of $Q_{n,\mathbf{m}}$ in $D_{a,r}$ indexed in increasing
distance from $a$. That is,
$$
|a-\zeta_{n,1}|\le|a-\zeta_{n,2}|\le\dots\le |a-\zeta_{n,\mu_n}|.
$$
We know that $\mu_n\ge \tau$ and $\liminf_{n \to \infty} |a -
z_{n,\tau+1}| > 0$, so we can use the arguments employed in Theorem
\ref{teo:4}. In particular, formulas \eqref{derivada1} and
\eqref{derivada2} prove that
\begin{equation}\label{limit5}
\limsup_{n\to\infty} \left\|(z-a)^\tau-Q_{n,a}\right\|^{1/n}\le
\frac{|a|}{R^*_{|{\bf m}|}(f_{k(a)})},
\end{equation}
where
$$ Q_{n,a}(z)=\prod_{j=1}^{\tau} (z-\zeta_{n,j}).
$$
 Formula \eqref{limit5} holds true for each
of the poles of $\mathbf{f}$, so it may be rewritten as
\begin{equation}\label{limit6}
\limsup_{n\to\infty}
\left\|(z-a_i)^{\tau_i}-Q_{n,a_i}\right\|^{1/n}\le
\frac{|a_i|}{R^*_{|{\bf m}|}(f_{k(a_i)})},
\end{equation}
where $\tau_i$ is the order of $a_i$ as a pole of $\mathbf{f}$ in
$\mathbf{D}_{\mathbf{m}}^*(\mathbf{f})$, $i=1,\dots,\nu$.

Let $Q_{|\mathbf{m}|}$ and $Q_{n,|\mathbf{m}|}$ be the polynomials
$\mathcal{Q}_{\mathbf{m}}$ and $Q_{n,\mathbf{m}}$ respectively
normalized to be monic. We can write
$$
\begin{array}{rcl}
\dst \left(Q_{|\mathbf{m}|}-Q_{n,{|\mathbf{m}|}}\right)(z)&=&\dst
Q_{|\mathbf{m}|}(z)-\frac{(Q_{|\mathbf{m}|}Q_{n,a_1})(z)}
{(z-a_1)^{\tau_1}}+\frac{(Q_{|\mathbf{m}|}Q_{n,a_1})(z)}
{(z-a_1)^{\tau_1}}-\cdots\\ \\
\dst&+&\dst \frac{(Q_{|\mathbf{m}|}Q_{n,a_1}\cdots
Q_{n,a_{\nu-1}})(z)}
{(z-a_1)^{\tau_1}\cdots(z-a_{\nu-1})^{\tau_{\nu-1}}}-Q_{n,{|\mathbf{m}|}}(z).
\end{array}
$$
Therefore
$$
\left|Q_{|\mathbf{m}|}-Q_{n,{|\mathbf{m}|}}\right|(z)\le\sum_{i=1}^\nu
\left| \frac{(Q_{|\mathbf{m}|}Q_{n,a_1}\cdots Q_{n,a_{i-1}})(z)}
{(z-a_1)^{\tau_1}\cdots(z-a_{i})^{\tau_{i}}}\left[
(z-a_i)^{\tau_i}-Q_{n,a_i}(z) \right]\right|.
$$
Since
$$
\lim_{n\to\infty}\frac{(Q_{|\mathbf{m}|}Q_{n,a_1}\cdots
Q_{n,a_{i-1}})(z)} {(z-a_1)^{\tau_1}\cdots(z-a_{i})^{\tau_{i}}}=
\frac{Q_{|\mathbf{m}|}(z)}{(z-a_i)^{\tau_i}},\quad i=1,\dots,\nu,
$$
uniformly on compact subsets of $\mathbb{C}$, with the aid of
\eqref{limit6}, we obtain the inequality \eqref{inequality2}.

Now, fix $k=1,\dots,d$. Let ${K}$ be an arbitrary compact subset of
$ D^*_{|{\bf m}|}(f_k) \setminus {\mathcal{P}}_{{\bf m},k}({\bf
f})$. Due to \eqref{tiendeacero}, and reasoning only for sufficiently
large values of $n$, we have that $K=K(\varepsilon)$ for all $\varepsilon > 0$ sufficiently small, where the definition of
$J_\varepsilon$ is given for $Q_{n,\mathbf{m}}$. Then, the
inequality \eqref{inequality} follows from the formulas
\eqref{telescopic1} and \eqref{telescopic2} when applied to the
incomplete Pad\'e approximant $R_{n,{\bf m},k}$.

Suppose now that the compact set ${K}\subset D^*_{|{\bf m}|}(f_k)
\setminus {\mathcal{P}}_{{\bf m},k}({\bf f})$ is $\sigma$-regular,
see Definition \ref{defregular}. Let us consider the constants
$A_{n,{\bf m},k}$ and the polynomials $q_{n,m-m^*,k}$ defined according to
Lemma \ref{telescopica} for the incomplete Pad\'e approximant
$R_{n,{\bf m},k}$, where $m=|\mathbf{m}|$ and $m^*=m_k$. Denote the
denominator of $R_{n,{\bf m},k}$ by $Q_{n,{\bf m},k}$. Put $
J_0^\prime=\cap_{\varepsilon>0} J_\varepsilon^\prime$ and take
$z_0\in K$ such that $\|z\|_K=|z_0|>0$. As $J_0^\prime$ is a set of
$\sigma$-content zero and the compact set $K$ is $\sigma$-regular,
there exists a sequence $\{z_j\}_{j\in\mathbb{N}}\subset K\setminus
J_0^\prime$ verifying $ \lim_{j\to\infty}z_j=z_0. $ We may assume
that $|z_j|>0$ for all $j\in\mathbb{N}$.

From Lemma \ref{telescopica}, it follows that
$$
|A_{n,{\bf m},k}|=\frac{|(Q_{n+1,{\bf m},k}\,Q_{n,{\bf m},k})(z_j)|\,
|R_{n+1,\mathbf{m},k}(z_j)-R_{n,\mathbf{m},k}(z_j)|}
{|z_j|^{n+1-\lambda_n-\lambda_{n+1}}|q_{n,m-m^*,k}(z_j)|}.
$$
We may write
$$
|R_{n+1,\mathbf{m},k}(z_j)-R_{n,\mathbf{m},k}(z_j)|\le
\|f_k-R_{n+1,\mathbf{m},k}\|_K+ \|f_k-R_{n,\mathbf{m},k}\|_K.
$$
So, taking into account the formulas \eqref{desig1} and
\eqref{desig2}, we arrive at
$$
\frac{1}{R^*_{|{\bf
m}|}(f_k)}=\limsup_{n\to\infty}|A_{n,{\bf m},k}|^{1/n}\le
\frac{1}{|z_j|}\dst\limsup_{n\to\infty}\|f_k-R_{n,{\bf m},k}\|^{1/n}_K.
$$
Taking limits in the above expression as $j$ tends to infinity, we
obtain that the inequality \eqref{inequality} is actually an
equality when $K$ is a $\sigma$-regular compact set, as we wanted to
prove.
\end{proof}

As was mentioned earlier,  if $\mathbf{f}$ is polewise independent in
$D_{|\mathbf{m}|}(\mathbf{f})$  it follows from Lemma 1 in
\cite{GS1} that $\mathbf{f}$ has exactly $|\mathbf{m}|$ poles in
$D_{|\mathbf{m}|}(\mathbf{f})$ and, due to part d) of Lemma
\ref{lema2}, it has at least  $|\mathbf{m}|$ poles in
$\mathbf{D}_{\mathbf{m}}^*(\mathbf{f})$. Now, part a) of Lemma
\ref{lema2} proves that Theorem \ref{saff2} includes the
Graves-Morris/Saff Theorem as a particular case although we have
used the latter in establishing this fact.

Theorem \ref{saff2} improves the
Graves-Morris/Saff Theorem in several aspects. First of all, \eqref{inequality} gives the correct bound since we have shown that it is exact for $\sigma$-regular compact sets. The
applicability of  Theorem \ref{saff2} is greater since there are
systems $\mathbf{f}$ that are not polewise independent
 and still  have exactly
$|\mathbf{m}|$ poles in $\mathbf{D}_{\mathbf{m}}^*(\mathbf{f})$.
Even when the system $\mathbf{f}$ is polewise independent the region of
convergence of the approximants given by Theorem \ref{saff2} is in
general larger than that of the Graves-Morris/Saff Theorem. Finally,
the bounds \eqref{inequality} and \eqref{inequality2} are less than or
equal to \eqref{inequality3} and \eqref{inequality4}, respectively.
Several examples in Section \ref{ejemplo3} illustrate these improvements. In Section \ref{ejemplo4} we show that, in general, the bound \eqref{inequality2} is still not exact.


\section{Examples}\label{examples}
\subsection{On the values of $R^*_m(f)$}\label{ejemplo1}

The purpose of this
example is to show that $R^*_m(f)$ may take any value between
$R_{m^*}(f)$ and $R_m(f)$ depending on the sequence of incomplete
Pad\'e approximants considered. Take $m^*=1,\,m=2$, and
$$
 f(z)=\frac{1}{1-z^2}.
$$
Then, $R_1(f)=1,\,R_2(f)=+\infty$. Consider
$$
g(z)=\frac{z}{1+z^2},\quad h(z)=\frac{1}{1+z},\quad
w_p(z)=\frac{1}{1+z}+\frac{1}{1-z/p},\quad p>1.
$$
Fix ${\bf m}=(1,1)$ and set $\mathbf{f}=(f,g)$. It is clear that
$R_2(\mathbf{f})=1$ and the system $\mathbf{f}$ is not polewise
independent with respect to $\mathbf{m}$  in $D_2(\mathbf{f})$. On
the other hand, $R_1(f)=R_1(g)=1$ and $R_2(f)=R_2(g)=+\infty$. It is
very easy to see that $Q_{n,\mathbf{m}}=1-z^2$ if $n$ is even and
$Q_{n,\mathbf{m}}=1+z^2$ when $n$ is odd. So, $R_2^*(f)=1$ since
$R_2^*(f)\ge R_1(f)=1$ and $R_2^*(f)$ cannot be greater than $1$.
Otherwise, from part iii) of Gonchar's Lemma, it follows that the
polynomial $Q_{n,\mathbf{m}}$ tends to $1-z^2$, which is not true.
An analogous argument proves that $R_2^*(g)=1$. This example also
shows that the reciprocal of the statement d) in Lemma \ref{lema2}
does not hold in general.

Now, take $\mathbf{f}=(f,h)$ with the same multi-index ${\bf m}$.
Obviously, $R_2^*(h)=+\infty$ since $R_1(h)=+\infty$. The system
$\mathbf{f}$ is polewise independent with respect to $\mathbf{m}$ in
$D_2(\mathbf{f})=\mathbb{C}$.  Using  part d) of Lemma
\ref{lema2}, we obtain $R_2^*(f)=+\infty$.

Finally, consider $\mathbf{f}=(f,w_p)$ and fix ${\bf m}=(1,1)$. We
have $R_2(\mathbf{f})=p$ and the system $\mathbf{f}$ is  polewise
independent with respect to $\mathbf{m}$  in $D_2(\mathbf{f})$. As
$R_2^*(w_p)\ge R_1(w_p)=p$, necessarily $R_2^*(f)\ge p$ due to Lemma
\ref{lema2} again. Then  $Q_{n,\mathbf{m}}$ tends to $1-z^2$ and
$R_2^*(w_p)=p$. An easy calculation shows that
$$
Q_{n,\mathbf{m}}(z)=\left\{
  \begin{array}{ll}
   \dst \lambda_n\left(z^2+\frac{p^2-1}{p^n-p}z-1\right), & \hbox{if $n$ is even,} \\
    \dst z^2-\frac{p^n-p^2}{p^n-1}, & \hbox{if $n$ is odd,}
  \end{array}
\right.
$$
with $\lim_{n\to\infty}\lambda_n=1$. Now, $R_2^*(f)$ may be worked
out by means of formula \eqref{radio} according to Lemma
\ref{telescopica}. Keeping in mind the notation adopted there and
using the expression of $Q_{n,\mathbf{m}}$ calculated before, it
turns out that
$$
|A_{n,2}|=\lambda_n\frac{p(p^2-1)}{p^{n+1}-1},\qquad n\;\,
\mbox{even}.
$$
Then, $ \lim_{n=2{\mathbb{Z}}_+} |A_{n,2}|^{1/n}=1/p, $ which implies $$
p \leq R_2^*(f)=\frac{1}{\dst\limsup_{n\to\infty} |A_{n,2}|^{1/n}}\le p. $$

Thus, we have proved that $R^*_2(f)=p$ may take any value between
$R_1(f)= 1$ and $R_2(f) = \infty$, both ends included.

\subsection{Comparison between the Graves-Morris/Saff Theorem and Theorem \ref{saff2}}
\label{ejemplo3}

First, let us see that there are very simple systems $\mathbf{f}$
that are not polewise independent in $D_{|\mathbf{m}|}(\mathbf{f})$
 and still they have exactly
$|\mathbf{m}|$ poles in $\mathbf{D}_{\mathbf{m}}^*(\mathbf{f})$. Set
\[
f_1(z)=\frac{1}{1-z}+\frac{1}{2-z},\qquad f_2(z)=\frac{1}{3-z},
\]
and fix the multi-index $\mathbf{m}=(1,1)$. Put
$\mathbf{f}=(f_1,f_2)$. It is clear that $R_2(\mathbf{f})=3$ and, as
$0f_1+f_2$ is analytic in $D_2(\mathbf{f})$, the system $\mathbf{f}$
is not polewise independent in $D_{2}(\mathbf{f})$. Also, as
$R_1(f_2)=\infty$, we have $R_2^*(f_2)=\infty$ and one of the poles
of $Q_{n,\mathbf{m}}$ is attracted by the point $z=3$ on account of
Corollary \ref{convergence2}. On the other hand, $R_1(f_1)=2$, so
$R_2^*(f_1)\ge 2$ but $R_2^*(f_1)$ cannot be greater than $2$ since
in that case two other poles of  $Q_{n,\mathbf{m}}$ would be
attracted by the points $z=1$ and $z=2$, which is absurd. Then
$R_2^*(f_1)= 2$ and the system $\mathbf{f}$ has exactly two poles,
$z=1$ and $z=3$, in $\mathbf{D}_{\mathbf{m}}^*(\mathbf{f})$.
This example also shows that the inequality appearing in part c) of
Lemma \ref{lema2} may be strict.

Now, fix again $\mathbf{m}=(1,1)$ and take $\mathbf{g}=(g_1,g_2)$,
where
$$
g_1(z)=\frac{1}{1-z}+\log (3-z),\qquad g_2(z)=\frac{1}{2-z}+\log
(10-z).
$$
Obviously, $R_1(g_1)=R_2^*(g_1)=R_2(g_1)=3$ and
$R_1(g_2)=R_2^*(g_2)=R_2(g_2)=10$. The system $\mathbf{g}$ is
polewise independent in $D_2(\mathbf{g})$ with $R_2(\mathbf{g})=3$.
The Morris-Graves/Saff Theorem gives
$$
\limsup_{n \to \infty} \|g_2 - R_{n,{\bf m},2}\|_{K}^{1/n} \leq
\frac{\|z\|_ {K}}{3},
$$
for any compact subset $K$ of $\{z\,:\, |z|<3\}$ and
$$
\limsup_{n \to \infty} \|\mathcal{Q}_{\mathbf{m}}(\mathbf{g}) -
Q_{n,{\bf m}}\|^{1/n}\le 2/3,
$$
where $\mathcal{Q}_{\mathbf{m}}(\mathbf{g})(z)=(1-z)(1-z/2)$. On the
other hand, Theorem \ref{saff2} gives
$$
\limsup_{n \to \infty} \|g_2 - R_{n,{\bf m},2}\|_{K}^{1/n} \leq
\frac{\|z\|_ {K}}{10},
$$
for any compact subset $K$ of $\{z\,:\, |z|<10\}$ and
$$
\limsup_{n \to \infty} \|\mathcal{Q}_{\mathbf{m}}(\mathbf{g}) -
Q_{n,{\bf m}}\|^{1/n}\le \max\{1/3,1/5\}=1/3.
$$

\subsection{On the rate of convergence of $\{Q_{n,\mathbf{m}}\}$}
\label{ejemplo4}

Let us show that the rate of convergence of the sequence of polynomials $Q_{n,\mathbf{m}}$
given by the inequality \eqref{inequality2} is not exact in general. Fix
$\mathbf{m}=(1,1)$ and consider the system $\mathbf{h}=(h_1,h_2)$,
where
$$
h_1(z)=\frac{1}{1-z}+\frac{1}{2-z}+\log(3-z),\quad
h_2(z)=\frac{1}{1-z}+\log(3-z)+\log(4-z).
$$
Obviously $R_2(\mathbf{h})=3$ and the system $\mathbf{h}$ is
polewise independent in $D_{2}(\mathbf{h})$. As
$R_1(h_2)=R_2(h_2)=3$, we have $R_2^*(h_2)=3$. On the other hand, we have that
$R_2(h_1)=3$, from which it follows that $R_2^*(h_1)\le 3$. Using
part d) of Lemma \ref{lema2}, we obtain $R_2^*(h_1)=3$. Therefore,
Theorem \ref{saff2} gives
$$
\limsup_{n \to \infty} \|\mathcal{Q}_{\mathbf{m}}(\mathbf{h}) -
Q_{n,{\bf m}}\|^{1/n}\le \max\{1/3,2/3\}=2/3,
$$
where $\mathcal{Q}_{\mathbf{m}}(\mathbf{h})(z)=(1-z)(1-z/2)$.

Consider now the system $\mathbf{\hat{h}}=(\hat{h}_1,\hat{h}_2)$,
where $\hat{h}_1=h_1-h_2$ and $\hat{h}_2=h_2$. We have
$R_1(\hat{h}_1)=4=R_2(\hat{h}_1)$, hence $R_2^*(\hat{h}_1)=4$. As
before, $R_2^*(\hat{h}_2)=3$. Obviously, the $(n,\mathbf{m})$  Hermite-Pad\'e
approximants of the systems $\mathbf{h}$ and $\mathbf{\hat{h}}$ have
the same common denominator $Q_{n,{\bf m}}$. Using again
Theorem \ref{saff2} for the new auxiliary system, we obtain a better estimate
$$
\limsup_{n \to \infty} \|\mathcal{Q}_{\mathbf{m}}(\mathbf{h}) -
Q_{n,{\bf m}}\|^{1/n}\le \max\{1/3,2/4\}=1/2.
$$




\begin{thebibliography}{50}

\bibitem{bre} C. Brezinski,  Pad\'e-Type Approximation and
General Orthogonal Polynomials, Birkh\"{a}user, Basel, 1980.

\bibitem{gon} A.A. Gonchar,  On the convergence of generalized
Pad\'e approximants of meromorphic functions, Sb. Math. 27 (1975)
503--514.

\bibitem{gon2} A.A. Gonchar,  Poles of rows of the Pad\'e table and
meromorphic continuation of functions, Sb. Math. 43 (1982) 527--546.

\bibitem{gon3} A.A. Gonchar, L.D. Grigorjan, On estimates of
the norm of the holomorphic component of a meromorphic function, Sb.
Math. 28 (1976) 571--575.

\bibitem{Had} J. Hadamard, Essai sur l'\'etude des fonctions
donn\'ees par leur d\'eveloppement de Taylor, J. Math. Pures Appl.
 8 (1892) 101--186.

\bibitem{marden} M. Marden,
Geometry of Polynomials,  Amer. Math. Soc., Providence, Rhode
Island,  1949.

\bibitem{Mon} R. de Montessus de Ballore, Sur les fractions continues
alg\'ebriques, Bull. Soc. Math. France 30 (1902) 28--36.

\bibitem{GS1} P.R. Graves-Morris, E.B. Saff, A de Montessus
theorem for vector-valued rational interpolants, Lecture Notes in
Math. 1105, Springer, Berlin, 1984, pp. 227--242.

\bibitem{GS2} P.R. Graves-Morris, E.B. Saff, Row convergence
theorems for generalized inverse vector-valued Pad\'e approximants,
J. Comp. Appl. Math. 23 (1988) 63--85.

\bibitem{GS3} P.R. Graves-Morris, E.B. Saff, An extension of a
row convergence theorem for vector Pad\'e approximants, J. Comp.
Appl. Math. 34 (1991) 315--324.

\bibitem{small} T. Sheil-Small,
Complex Polynomials,  Cambridge University Press, Cambridge, 2002.


\end{thebibliography}
\end{document}